\newtheorem{theorem}{Theorem}[section]
\newtheorem{observation}[theorem]{Observation}
\newtheorem{lemma}[theorem]{Lemma}
\newcommand\DELETE[1]{}
\begin{document}

\title{{\bf On oriented cliques with respect to push operation}}
\author{
{\sc Julien Bensmail\footnote{The first author was supported by ERC Advanced Grant GRACOL, project no. 320812.} }$\,^{a}$, {\sc Soumen Nandi}$\,^{b}$, {\sc Sagnik Sen}$^{b}$\\
\mbox{}\\
{\small $(a)$ Technical University of Denmark, Denmark}\\
{\small $(b)$ Indian Statistical Institute, Kolkata, India}
}

\date{\today}

\maketitle

\begin{abstract}
 To push a vertex $v$ of a directed graph $\overrightarrow{G}$ is to change the 
  orientations  of all the arcs  incident with $v$. 
  An oriented graph is a directed graph without any cycle of length at most 2. 
An oriented clique is an oriented graph whose non-adjacent vertices are connected by a directed 2-path. A push clique is an oriented clique that remains an oriented clique even if one pushes any set of vertices of it. We show that it is NP-complete to decide if 
an undirected graph is underlying graph of a push clique or not. We also prove that 
a planar push clique can have at most 8 vertices. We also provide an exhaustive 
 list of minimal (with respect to spanning subgraph inclusion) planar push cliques. 
 
\medskip

\noindent \textit{Keyword:} oriented graphs, oriented cliques, push operation, planar graphs. 
\end{abstract}



\section{Introduction}
An {\textit{oriented graph}} $\overrightarrow{G}$
 is a directed graph with no cycle of length 1 or 2. The underlying undirected simple graph of $\overrightarrow{G}$ is
 denoted by $G$ while $\overrightarrow{G}$ is an \textit{orientation} of $G$. 
An \textit{oriented $k$-coloring} of an oriented graph $\overrightarrow{G}$ is a mapping $f$ from the vertex set $V(\overrightarrow{G})$ to a set of $k$ colors   such that  $f(u) \neq f(v)$ whenever $u$ and $v$ are adjacent and,   if 
$\overrightarrow{uv}$ and $\overrightarrow{wx}$ are 
two arcs in $\overrightarrow{G}$, then $f(u) = f(x)$ implies $f(v) \neq f(w)$. 
\noindent The \textit{oriented chromatic number} $\chi_o(\overrightarrow{G})$ of an oriented graph $\overrightarrow{G}$ is the smallest integer $k$ for which 
$\overrightarrow{G}$ admits an oriented $k$-coloring. Oriented coloring is a well studied topic (see the latest survey~\cite{sopena_updated_survey} for details).

 To \textit{push} a vertex $v$ of a directed graph $\overrightarrow{G}$ is to change the 
  orientations  of all the arcs (that is, to replace an arc $\overrightarrow{xy}$ by $\overrightarrow{yx}$) incident with $v$. 
  Push operation on directed graphs is a well studied topic~\cite{fisher-push, klostermeyer2, klostermeyer1}.

Two orientations  $\overrightarrow{G}$ and $\overrightarrow{G}'$ of $G$ are in a push relation if one can obtain $\overrightarrow{G}'$ by pushing a set of vertices of $\overrightarrow{G}$. 
The \textit{pushable chromatic number} $\chi_p(\overrightarrow{G})$, introduced by Klostermeyer and MacGillivray~\cite{push}, of an oriented graph $\overrightarrow{G}$ is the minimum oriented chromatic number taken over all
oriented graphs that are in  push relation with $\overrightarrow{G}$.

An oriented clique, introduced by Klostermeyer and MacGillivray~\cite{36}, is an oriented graph $\overrightarrow{C}$ with 
$\chi_o(\overrightarrow{C}) = |\overrightarrow{C}|$. 
In fact, an oriented clique is characterized as 
those oriented graphs in which each pair of non-adjacent vertices are connected by a directed 2-path. 
Due to the above characterization oriented cliques can be viewed as natural objects. 
Moreover, they 
play a significant role in studying oriented coloring as pointed out in~\cite{oclique_nandy}. 
An undirected simple graph is  called an \textit{underlying oriented clique} if it is underlying graph of an oriented clique.

We are interested in those oriented cliques that remain invarient under the push operation, that is, those  oriented cliques for which every oriented graph with a push relation with them are also oriented cliques. 
In fact it is easy to observe that if $\overrightarrow{C}$ is such a clique, 
then  $\chi_p(\overrightarrow{C}) = |\overrightarrow{C}|$. Thus, an oriented graph $\overrightarrow{C}$ is a \textit{push clique} if $\chi_p(\overrightarrow{C}) = |\overrightarrow{C}|$. 
Also an undirected simple graph is   an \textit{underlying push clique} if it is underlying graph of a push clique.

\begin{figure}

\centering
\begin{tikzpicture}[scale=.8]

\filldraw [black] (-.5,6) circle (2pt) {node[left]{}};

\node at (-.5,4.25) {$H_1$};

\filldraw [black] (.5,6.5) circle (2pt) {node[above]{}};
\filldraw [black] (.5,5.5) circle (2pt) {node[below left]{}};

\draw[-] (.5,6.5) -- (.5,5.5);

\node at (.5,4.25) {$H_2$};

\filldraw [black] (2,7) circle (2pt) {node[above]{}};
\filldraw [black] (1.5,6) circle (2pt) {node[below left]{}};
\filldraw [black] (2,5) circle (2pt) {node[below left]{}};

\draw[-] (2,7) -- (1.5,6) -- (2,5) -- (2,7);

\node at (1.7,4.25) {$H_3$};

\filldraw [black] (3,6.5) circle (2pt) {node[above]{}};
\filldraw [black] (3,5.5) circle (2pt) {node[below left]{}};
\filldraw [black] (4,6.5) circle (2pt) {node[below left]{}};
\filldraw [black] (4,5.5) circle (2pt) {node[below left]{}};

\draw[-] (3,6.5) -- (3,5.5);
\draw[-] (4,6.5) -- (4,5.5);
\draw[-] (3,6.5) -- (4,6.5);
\draw[-] (4,5.5) -- (3,5.5);

\node at (3.5,4.25) {$H_4$};

\filldraw [black] (5,5.25) circle (2pt) {node[above]{}};
\filldraw [black] (7,5.25) circle (2pt) {node[below left]{}};
\filldraw [black] (5,6.25) circle (2pt) {node[below left]{}};
\filldraw [black] (7,6.25) circle (2pt) {node[below left]{}};
\filldraw [black] (6,7.25) circle (2pt) {node[below left]{}};

\draw[-] (5,5.25) -- (7,5.25) -- (7,6.25) -- (6,7.25) -- (5,6.25) -- (5,5.25);
\draw[-] (5,6.25) -- (7,6.25);
\draw (6,7.25) .. controls (4.5,7.05) and (4.15,5.95) .. (5,5.25);

\node at (6,4.25) {$H_5$};

\filldraw [black] (9,7.5) circle (2pt) {node[above]{}};
\filldraw [black] (8,6.75) circle (2pt) {node[above]{}};
\filldraw [black] (10,6.75) circle (2pt) {node[above]{}};
\filldraw [black] (8,5.75) circle (2pt) {node[above]{}};
\filldraw [black] (10,5.75) circle (2pt) {node[above]{}};
\filldraw [black] (9,5) circle (2pt) {node[above]{}};

\draw[-] (9,7.5) -- (8,6.75) -- (8,5.75) -- (9,5) -- (10,5.75) -- (10,6.75) -- (9,7.5);
\draw[-] (8,5.75) -- (10,5.75);
\draw[-] (8,5.75) -- (10,6.75);
\draw (9,7.5) .. controls (11,7.25) and (10,5.75) .. (10, 5.75);
\draw (9,7.5) .. controls (11.25,7.5) and (11.25,5.5) .. (9,5);

\node at (9,4.25) {$H_6$};

\filldraw [black] (1,3.5) circle (2pt) {node[above]{}};
\filldraw [black] (0,2.5) circle (2pt) {node[above]{}};
\filldraw [black] (2,2.5) circle (2pt) {node[above]{}};
\filldraw [black] (0,1.5) circle (2pt) {node[above]{}};
\filldraw [black] (2,1.5) circle (2pt) {node[above]{}};
\filldraw [black] (1,.5) circle (2pt) {node[above]{}};

\draw[-] (1,3.5) -- (0,2.5) -- (0,1.5) -- (1,.5) -- (2,1.5) -- (2,2.5) -- (1,3.5);
\draw[-] (0,2.5) -- (2,2.5);
\draw[-] (0,1.5) -- (2,1.5);
\draw (1,3.5) .. controls (3,3) and (3,1) .. (1,.5);

\node at (1,-.25) {$H_7$};

\filldraw [black] (5,3.5) circle (2pt) {node[above]{}};
\filldraw [black] (4,2.5) circle (2pt) {node[above]{}};
\filldraw [black] (6,2.5) circle (2pt) {node[above]{}};
\filldraw [black] (4,1.5) circle (2pt) {node[above]{}};
\filldraw [black] (6,1.5) circle (2pt) {node[above]{}};
\filldraw [black] (5,.5) circle (2pt) {node[above]{}};

\draw[-] (5,3.5) -- (4,2.5) -- (4,1.5) -- (5,.5) -- (6,1.5) -- (6,2.5) -- (5,3.5);
\draw[-] (5,3.5) -- (6,1.5);
\draw[-] (5,3.5) -- (4,1.5);
\draw[-] (5,3.5) -- (5,.5);
\draw (4,2.5) .. controls (4.3,4.35) and (5.7,4.35) .. (6,2.5);

\node at (5,-.25) {$H_8$};

\filldraw [black] (9,3.5) circle (2pt) {node[above]{}};
\filldraw [black] (8,2.5) circle (2pt) {node[above]{}};
\filldraw [black] (10,2.5) circle (2pt) {node[above]{}};
\filldraw [black] (8,1.5) circle (2pt) {node[above]{}};
\filldraw [black] (10,1.5) circle (2pt) {node[above]{}};
\filldraw [black] (9,.5) circle (2pt) {node[above]{}};

\draw[-] (9,3.5) -- (8,2.5) -- (8,1.5) -- (9,.5) -- (10,1.5) -- (10,2.5) -- (9,3.5);
\draw[-] (8,1.5) -- (10,1.5);
\draw[-] (8,2.5) -- (10,2.5);
\draw (9,3.5) .. controls (8,3.5) and (7,2.7) .. (8,1.5);
\draw (10,2.5) .. controls (11.18,1.15) and (10,.7) .. (9,.5);
\node at (9,-.25) {$H_9$};

\filldraw [black] (1,-1) circle (2pt) {node[above]{}};
\filldraw [black] (0,-2) circle (2pt) {node[above]{}};
\filldraw [black] (2,-2) circle (2pt) {node[above]{}};
\filldraw [black] (0,-3) circle (2pt) {node[above]{}};
\filldraw [black] (2,-3) circle (2pt) {node[above]{}};
\filldraw [black] (.5,-4) circle (2pt) {node[above]{}};
\filldraw [black] (1.5,-4) circle (2pt) {node[above]{}};

\draw[-] (1,-1) -- (0,-2) -- (0,-3) -- (.5,-4) -- (1.5,-4) -- (2,-3) -- (2,-2) -- (1,-1);
\draw[-] (0,-2) -- (2,-3);
\draw[-] (0,-3) -- (1.5,-4);
\draw (2,-2) .. controls (2.7,-2.6) and (2.3,-3.8) .. (1.5,-4);
\draw (1,-1) .. controls (-.5,-1.4) and (-.3,-2.5) .. (0,-3);
\draw (1,-1) .. controls (-1.3,-1.2) and (-1,-3.8) .. (.5,-4);

\node at (1,-4.75) {$H_{10}$};

\filldraw [black] (5,-1) circle (2pt) {node[above]{}};
\filldraw [black] (4,-2) circle (2pt) {node[above]{}};
\filldraw [black] (6,-2) circle (2pt) {node[above]{}};
\filldraw [black] (4,-3) circle (2pt) {node[above]{}};
\filldraw [black] (6,-3) circle (2pt) {node[above]{}};
\filldraw [black] (4.5,-4) circle (2pt) {node[above]{}};
\filldraw [black] (5.5,-4) circle (2pt) {node[above]{}};

\draw[-] (5,-1) -- (4,-2) -- (4,-3) -- (4.5,-4) -- (5.5,-4) -- (6,-3) -- (6,-2) -- (5,-1);
\draw[-] (5,-1) -- (4,-3);
\draw[-] (5,-1) -- (6,-3);
\draw[-] (4,-3) -- (6,-3);
\draw[-] (4.5,-4) -- (6,-3);
\draw (6,-2) .. controls (6.7,-2.6) and (6.3,-3.8) .. (5.5,-4);
\draw (4,-2) .. controls (4.6,-.3) and (5.4,-.3) .. (6,-2);
\draw (4,-2) .. controls (3.4,-2.5) and (3.2,-5) .. (5.5,-4);

\node at (5,-4.75) {$H_{11}$};

\filldraw [black] (9,-1) circle (2pt) {node[above]{}};
\filldraw [black] (8,-2) circle (2pt) {node[above]{}};
\filldraw [black] (10,-2) circle (2pt) {node[above]{}};
\filldraw [black] (8,-3) circle (2pt) {node[above]{}};
\filldraw [black] (10,-3) circle (2pt) {node[above]{}};
\filldraw [black] (8.5,-4) circle (2pt) {node[above]{}};
\filldraw [black] (9.5,-4) circle (2pt) {node[above]{}};

\draw[-] (9,-1) -- (8,-2) -- (8,-3) -- (8.5,-4) -- (9.5,-4) -- (10,-3) -- (10,-2) -- (9,-1);
\draw[-] (9,-1) -- (10,-3);
\draw[-] (8,-2) -- (8.5,-4);
\draw[-] (8,-2) -- (9.5,-4);
\draw[-] (8,-2) -- (10,-3);
\draw (9,-1) .. controls (8,-1) and (7,-1.8) .. (8,-3);
\draw (9.5,-4) .. controls (10.2,-3.5) and (10.7,-3) .. (10,-2);
\draw (8.5,-4) .. controls (9.3,-4.6) and (11.8,-4) .. (10,-2);

\node at (9,-4.75) {$H_{12}$};

\filldraw [black] (1,-5.5) circle (2pt) {node[above]{}};
\filldraw [black] (0,-6.5) circle (2pt) {node[above]{}};
\filldraw [black] (2,-6.5) circle (2pt) {node[above]{}};
\filldraw [black] (0,-7.5) circle (2pt) {node[above]{}};
\filldraw [black] (2,-7.5) circle (2pt) {node[above]{}};
\filldraw [black] (.5,-8.5) circle (2pt) {node[above]{}};
\filldraw [black] (1.5,-8.5) circle (2pt) {node[above]{}};

\draw[-] (1,-5.5) -- (0,-6.5) -- (0,-7.5) -- (.5,-8.5) -- (1.5,-8.5) -- (2,-7.5) -- (2,-6.5) -- (1,-5.5);
\draw[-] (1,-5.5) -- (0,-7.5);
\draw[-] (1,-5.5) -- (2,-7.5);
\draw[-] (0,-7.5) -- (2,-7.5);
\draw[-] (0,-7.5) -- (1.5,-8.5);
\draw (1.5,-8.5) .. controls (2.2,-8) and (2.7,-7.5) .. (2,-6.5);
\draw (.5,-8.5) .. controls (1.3,-9.1) and (3.8,-8.5) .. (2,-6.5);
\draw (0,-6.5) .. controls (-1,-7.2) and (0,-8.8) .. (.5,-8.5);

\node at (1,-9.45) {$H_{13}$};

\filldraw [black] (5,-5.5) circle (2pt) {node[above]{}};
\filldraw [black] (4,-6.5) circle (2pt) {node[above]{}};
\filldraw [black] (6,-6.5) circle (2pt) {node[above]{}};
\filldraw [black] (4,-7.5) circle (2pt) {node[above]{}};
\filldraw [black] (6,-7.5) circle (2pt) {node[above]{}};
\filldraw [black] (4.5,-8.5) circle (2pt) {node[above]{}};
\filldraw [black] (5.5,-8.5) circle (2pt) {node[above]{}};

\draw[-] (5,-5.5) -- (4,-6.5) -- (4,-7.5) -- (4.5,-8.5) -- (5.5,-8.5) -- (6,-7.5) -- (6,-6.5) -- (5,-5.5);

\draw[-] (5,-5.5) -- (4.5,-8.5);
\draw[-] (5,-5.5) -- (5.5,-8.5);
\draw[-] (5.5,-8.5) -- (6,-6.5);
\draw (4,-6.5) .. controls (4.6,-4.8) and (5.4,-4.8) .. (6,-6.5);
\draw (4,-7.5) .. controls (4,-9.5) and (6,-9.5) .. (6,-7.5);

\node at (5,-9.45) {$H_{14}$};

\filldraw [black] (8.5,-5.5) circle (2pt) {node[above]{}};
\filldraw [black] (9.5,-5.5) circle (2pt) {node[above]{}};
\filldraw [black] (8,-6.5) circle (2pt) {node[above]{}};
\filldraw [black] (10,-6.5) circle (2pt) {node[above]{}};
\filldraw [black] (8,-7.5) circle (2pt) {node[above]{}};
\filldraw [black] (10,-7.5) circle (2pt) {node[above]{}};
\filldraw [black] (8.5,-8.5) circle (2pt) {node[above]{}};
\filldraw [black] (9.5,-8.5) circle (2pt) {node[above]{}};

\draw[-] (8.5,-5.5) -- (8,-6.5) -- (8,-7.5) -- (8.5,-8.5) -- (9.5,-8.5) -- (10,-7.5) -- (10,-6.5) -- (9.5,-5.5) -- (8.5,-5.5);
\draw[-] (8,-6.5) -- (8.5,-8.5) -- (10,-7.5) -- (9.5,-5.5) -- (8,-6.5);
\draw (9.5,-8.5) .. controls (10.2,-8) and (10.7,-7.5) .. (10,-6.5);
\draw (10,-6.5) .. controls (10.3,-5) and (8.8,-5) .. (8.5,-5.5);
\draw (8.5,-5.5) .. controls (8,-5.5) and (7,-6.3) .. (8,-7.5);
\draw (8,-7.5) .. controls (8,-9.3) and (9,-8.8) .. (9.5,-8.5);

\node at (9,-9.45) {$H_{15}$};

\filldraw [black] (.5,-10.5) circle (2pt) {node[above]{}};
\filldraw [black] (1.5,-10.5) circle (2pt) {node[above]{}};
\filldraw [black] (0,-11.5) circle (2pt) {node[above]{}};
\filldraw [black] (2,-11.5) circle (2pt) {node[above]{}};
\filldraw [black] (0,-12.5) circle (2pt) {node[above]{}};
\filldraw [black] (2,-12.5) circle (2pt) {node[above]{}};
\filldraw [black] (.5,-13.5) circle (2pt) {node[above]{}};
\filldraw [black] (1.5,-13.5) circle (2pt) {node[above]{}};

\draw[-] (.5,-10.5) -- (0,-11.5) -- (0,-12.5) -- (.5,-13.5) -- (1.5,-13.5) -- (2,-12.5) -- (2,-11.5) -- (1.5,-10.5) -- (.5,-10.5);
\draw[-] (.5,-10.5) -- (2,-11.5) -- (0,-11.5) -- (2,-12.5) -- (0,-12.5) -- (1.5,-13.5);
\draw (1.5,-13.5) .. controls (2.2,-13) and (2.7,-12.5) .. (2,-11.5);
\draw (.5,-10.5) .. controls (0,-10.5) and (-1,-11.3) .. (0,-12.5);
\draw (.5,-13.5) .. controls (2.5,-14.8) and (3.4,-12.1) .. (2,-11.5);
\draw (0,-12.5) .. controls (-1.5,-11.5) and (-.5,-9.5) .. (1.5,-10.5);

\node at (1,-14.25) {$H_{16}$};

\filldraw [black] (4.5,-10.5) circle (2pt) {node[above]{}};
\filldraw [black] (5.5,-10.5) circle (2pt) {node[above]{}};
\filldraw [black] (4,-11.5) circle (2pt) {node[above]{}};
\filldraw [black] (6,-11.5) circle (2pt) {node[above]{}};
\filldraw [black] (4,-12.5) circle (2pt) {node[above]{}};
\filldraw [black] (6,-12.5) circle (2pt) {node[above]{}};
\filldraw [black] (4.5,-13.5) circle (2pt) {node[above]{}};
\filldraw [black] (5.5,-13.5) circle (2pt) {node[above]{}};

\draw[-] (4.5,-10.5) -- (4,-11.5) -- (4,-12.5) -- (4.5,-13.5) -- (5.5,-13.5) -- (6,-12.5) -- (6,-11.5) -- (5.5,-10.5) -- (4.5,-10.5);
\draw[-] (4.5,-10.5) -- (6,-11.5) -- (4,-11.5) -- (6,-12.5) -- (4,-12.5) -- (5.5,-13.5);
\draw (5.5,-13.5) .. controls (6.2,-13) and (6.7,-12.5) .. (6,-11.5);
\draw (4.5,-10.5) .. controls (4,-10.5) and (3,-11.3) .. (4,-12.5);
\draw (4.5,-13.5) .. controls (2.5,-12.5) and (3,-10.5) .. (4.5,-10.5);
\draw (5.5,-13.5) .. controls (7.2,-12.9) and (7,-11.5) .. (5.5,-10.5);

\node at (5,-14.25) {$H_{17}$};

\filldraw [black] (8.5,-10.5) circle (2pt) {node[above]{}};
\filldraw [black] (9.5,-10.5) circle (2pt) {node[above]{}};
\filldraw [black] (8,-11.5) circle (2pt) {node[above]{}};
\filldraw [black] (10,-11.5) circle (2pt) {node[above]{}};
\filldraw [black] (8,-12.5) circle (2pt) {node[above]{}};
\filldraw [black] (10,-12.5) circle (2pt) {node[above]{}};
\filldraw [black] (8.5,-13.5) circle (2pt) {node[above]{}};
\filldraw [black] (9.5,-13.5) circle (2pt) {node[above]{}};

\draw[-] (8.5,-10.5) -- (8,-11.5) -- (8,-12.5) -- (8.5,-13.5) -- (9.5,-13.5) -- (10,-12.5) -- (10,-11.5) -- (9.5,-10.5) -- (8.5,-10.5);
\draw[-] (8.5,-10.5) -- (8,-12.5) -- (9.5,-10.5) -- (8.5,-13.5);
\draw[-] (9.5,-10.5) -- (9.5,-13.5) -- (10,-11.5);
\draw (10,-11.5) .. controls (10.3,-10) and (8.8,-10) .. (8.5,-10.5);
\draw (8,-11.5) .. controls (7.9,-11.5) and (7,-12.6) .. (8.5,-13.5);
\draw (8.5,-13.5) .. controls (8.9,-13.7) and (10,-14.5) .. (10,-12.5);
\draw (8,-11.5) .. controls (8,-9) and (11.3,-9.5) .. (10,-12.5);

\node at (9,-14.25) {$H_{18}$};

\end{tikzpicture}

\caption{List of minimal planar underlying push cliques upto spanning subgraph inclusion.}\label{fig planar push cliques}
\end{figure}
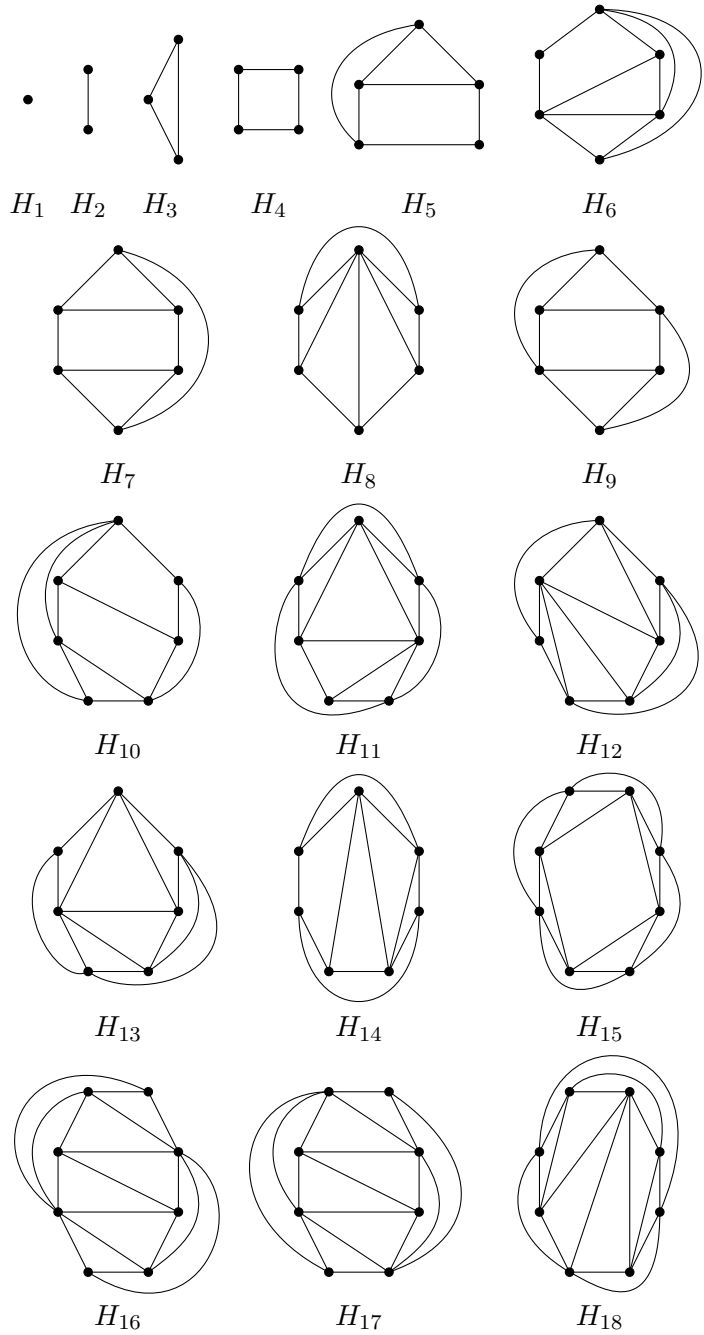

Given an undirected simple graph it is NP-hard to determine if it is an underlying oriented clique~\cite{bensmail2013complexity}. We prove an analogous result for underlying push cliques.

\begin{theorem}\label{th NP}
Let $G$ be an undirected simple graph. It is NP-complete to decide if $G$ is an underlying push clique. 
\end{theorem}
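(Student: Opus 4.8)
The plan is to establish membership in NP together with a clean combinatorial reformulation of the property, and then to reduce from a satisfiability problem. Membership in NP is immediate: an orientation $\overrightarrow{G}$ of $G$ is a polynomial-size certificate whose validity can be checked in polynomial time, most conveniently through the combinatorial reformulation below. The key preliminary step is to make this verification local. For a non-adjacent pair $\{u,v\}$ and a common neighbour $w$, call $w$ \emph{good} if the edges $uw$ and $wv$ form a directed $2$-path (exactly one of them enters $w$) and \emph{bad} otherwise. First I would prove the elementary fact that pushing a set $S$ flips the good/bad status of \emph{every} common neighbour of $\{u,v\}$ precisely when exactly one of $u,v$ lies in $S$, and changes none of them otherwise. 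It follows that $\overrightarrow{G}$ is a push clique if and only if every non-adjacent pair of $G$ has at least one good and at least one bad common neighbour; hence $G$ is an underlying push clique iff it admits such an orientation, and in particular every non-adjacent pair must have at least two common neighbours.

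Next I would phrase the existence of such an orientation as a Boolean constraint problem. Encoding each edge orientation by a bit, the status of a common neighbour $w$ of $\{u,v\}$ is the parity (the $\mathbb{F}_2$-sum) of the bits of $uw$ and $wv$, the two parities corresponding to good and bad. The requirement at $\{u,v\}$ is then exactly that the status bits of its common neighbours are \emph{not all equal}. Thus a pair with two common neighbours imposes an $\mathbb{F}_2$-linear equality/inequality, whereas a pair with three common neighbours imposes a genuine not-all-equal condition on three parities. This is precisely the structure of \textsc{Not-All-Equal 3-Sat}, which is NP-complete, and it is the problem I would reduce from (or a convenient restricted variant thereof).

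For the reduction I would build, from an \textsc{Nae-3Sat} formula, a graph $G$ out of two gadget types. \emph{Wire} gadgets are non-adjacent pairs with exactly two common neighbours; each forces an equality or inequality between two status bits, which lets me manufacture, for every variable, a single value that is propagated to all of its occurrences, with negation realised by an inequality wire. \emph{Clause} gadgets are non-adjacent pairs with exactly three common neighbours, one per literal, so that the not-all-equal requirement at the pair is literally the \textsc{Nae} clause on the three literal values. Choosing the adjacencies of $G$ so that its only relevant non-edges are the designated wire and clause pairs, a truth assignment satisfies the formula if and only if the induced orientation gives every non-adjacent pair both a good and a bad common neighbour, i.e.\ iff $G$ is an underlying push clique. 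As the reduction is plainly polynomial, this yields NP-hardness, and with the NP membership above, NP-completeness.

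The main obstacle is controlling unintended interactions. Because a vertex's good/bad status is read off from shared edges, every new pair of vertices is a potential non-adjacent pair carrying its own not-all-equal constraint, and the necessary condition that each non-adjacent pair have at least two common neighbours must never fail by accident. The delicate part of the construction is therefore to pin down the common neighbourhood of each designated pair to be exactly the intended two or three vertices, and to neutralise every incidental non-adjacent pair either by inserting an edge or by attaching private auxiliary common neighbours whose status bits are free (so that the pair trivially acquires one good and one bad neighbour)---all without these auxiliary edges secretly constraining a gadget variable. Since planarity is not required here, I have ample freedom to add edges and apex-like auxiliary vertices to enforce the intended adjacency pattern, and the proof then reduces to a careful but routine verification that the gadgets behave as specified.
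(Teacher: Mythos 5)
Your NP-membership argument and your good/bad reformulation are both correct: the parity argument showing that pushing a set $S$ flips the status of every common neighbour of $\{u,v\}$ exactly when $S$ contains precisely one of $u,v$ is right, and the resulting characterization (every non-adjacent pair needs at least one good and at least one bad common neighbour) is exactly the paper's characterization via special $4$-cycles. The genuine gap is in the hardness direction: the reduction from \textsc{Nae-3Sat} is a plan, not a proof. You never specify the wire or clause gadgets, and the obstacle you yourself flag---that \emph{every} non-adjacent pair of the constructed graph carries its own not-all-equal constraint and must, as a necessary condition, have at least two common neighbours---is precisely where such a proof lives or dies, and it is left unresolved. Worse, your proposed repair is circular: attaching ``private auxiliary common neighbours'' to neutralise an incidental pair introduces new vertices that are themselves non-adjacent to almost every other vertex, spawning a fresh family of incidental pairs, each again requiring two common neighbours with one good and one bad status; nothing in the proposal shows this cascade terminates, nor that the auxiliary edges leave the designated wire/clause parities unconstrained. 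Declaring the verification ``careful but routine'' does not discharge it; in a gadget reduction, the gadget specification and that verification \emph{are} the proof.

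It is worth seeing how the paper avoids all of this. It reduces from a problem already known to be NP-hard, namely recognition of underlying oriented cliques~\cite{bensmail2013complexity}, by adding a single universal vertex $v_*$ to $G$ to form $G_*$: after pushing the in-neighbours of $v_*$, every non-adjacent pair of $G_*$ automatically \emph{agrees} on $v_*$ (your ``bad'' neighbour comes for free, for every pair simultaneously), so $G_*$ is an underlying push clique if and only if $G$ is an underlying oriented clique---a few-line lemma in place of any gadgetry. Note that your passing mention of ``apex-like auxiliary vertices'' points exactly there: once an apex neutralises the agree half of every constraint, what remains is precisely the oriented-clique condition, so your from-scratch \textsc{Nae-3Sat} construction would in effect be re-proving the NP-hardness of underlying-oriented-clique recognition. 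That is a legitimate route, but it is a substantial construction that your proposal does not carry out.
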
 

Oriented cliques of planar and outerplanar graphs are studied in details~\cite{oclique_nandy}. 
It is proved that a planar oriented clique can have at most 15 vertices~\cite{oclique_nandy} which positively settled a conjecture by Klostermeyer and MacGillivray~\cite{36}. Note that there exists a planar oriented clique with 15 vertices which implies that the above mentioned bound is tight. 
Here we prove that a push clique can have at most 8 vertices and that this bound is tight.

\begin{theorem}\label{th planar push clique}
A planar push clique can have at most eight vertices. Moreover, there exists a planar push clique with eight vertices. 
\end{theorem}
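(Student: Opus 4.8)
The plan is to first translate the push-clique property into a condition on the underlying graph together with a constraint on its orientation, and then to combine that with planarity. For a $2$-path $u-w-v$ in an orientation $\overrightarrow{C}$, call it \emph{directed} if its arcs are $\overrightarrow{uw},\overrightarrow{wv}$ (or $\overrightarrow{vw},\overrightarrow{wu}$), and \emph{non-directed} if both arcs point into $w$ or both out of $w$. The key observation about pushing is that pushing the centre $w$ preserves the type of $u-w-v$, while pushing exactly one of $u,v$ flips it; hence after pushing a set $S$ the type changes by the parity $[u\in S]+[v\in S]\pmod 2$, which is \emph{the same} for every common neighbour $w$ of the pair $(u,v)$. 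Consequently $\overrightarrow{C}$ stays an oriented clique under all pushes if and only if, for every non-adjacent pair $(u,v)$, the common neighbours realize \emph{both} a directed and a non-directed $2$-path. In particular every non-adjacent pair needs at least two common neighbours; but this alone neither suffices nor bounds the order (e.g. a bipyramid over $C_k$ satisfies it for every $k$), so the orientation constraint must be exploited.

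For the upper bound, note that a push clique is an oriented clique, so I may start from the structural picture of planar oriented cliques from~\cite{oclique_nandy} (diameter $2$, at most $15$ vertices). Suppose for contradiction that $G$ underlies a planar push clique on $n\ge 9$ vertices. I would encode the orientation as a $\mathbb{Z}_2$ system: for each vertex $w$ record the bipartition of $N(w)$ into in- and out-neighbours, defined only up to a global swap (which is exactly pushing $w$); the condition above then demands, for each non-adjacent pair, that its common neighbours are not all of one type. Pairs with exactly two common neighbours $w,w'$ force a rigid equation saying that $w$ and $w'$ must give opposite types — precisely the proper-$2$-colouring constraint that makes bipyramids over $C_{\ge 5}$ fail on an odd cycle. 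The goal is to show that for $n\ge 9$ the planar, $\ge 2$-common-neighbour structure always produces an inconsistent (odd) closed system of such forced equations.

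The main obstacle is exactly this consistency analysis, since it must quantify how many non-adjacent pairs can afford three or more common neighbours (thereby relaxing their equations) inside a planar graph, and then prove that the remaining rigid pairs necessarily close an odd cycle. I would attack it by discharging on a fixed planar embedding: assign charge $d(v)-4$ to each vertex and $\ell-4$ to each face, so that by Euler's formula the total charge is $-8$; redistribute charge across the many $4$-cycles forced by the two common neighbours of each non-adjacent pair, and show that a valid (consistency-respecting) configuration cannot keep the charge negative once $n\ge 9$. This discharging step, together with a finite check on the handful of dense planar candidates that survive it, is where essentially all of the work lies.

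For tightness I would exhibit an explicit planar graph on eight vertices together with an orientation meeting the first-paragraph condition: take a suitable planar triangulation on $8$ vertices in which every non-adjacent pair has two common neighbours, and orient it so that these two neighbours always realize opposite $2$-path types, checking that the associated $\mathbb{Z}_2$ system is consistent. Such witnesses are exactly the eight-vertex members of the family $H_1,\dots,H_{18}$ of minimal planar underlying push cliques displayed in Figure~\ref{fig planar push cliques}; any admissible orientation of one of them certifies that the bound $8$ is attained, completing Theorem~\ref{th planar push clique}.
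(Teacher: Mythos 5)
Your first paragraph is correct and is essentially the paper's own characterization in different clothing: a pair of non-adjacent vertices must have a common neighbour giving a directed $2$-path and another giving a non-directed one, i.e.\ the pair must lie on what the paper calls a special $4$-cycle (equivalently, the pair agrees on one vertex and disagrees on another), and this property is push-invariant because pushing flips all types for a fixed pair simultaneously. The lower bound strategy (exhibit an $8$-vertex planar witness with a consistent orientation) is also fine in principle, though you never actually produce and verify an orientation, which is a routine but necessary step.

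The genuine gap is the upper bound, which is the entire content of the theorem: what you give is a research plan, not a proof. You propose a discharging scheme (charges $d(v)-4$ and $\ell-4$, total $-8$) and a $\mathbb{Z}_2$-consistency analysis of the rigid pairs, but you state no discharging rules, prove no structural lemmas, never identify the ``handful of dense planar candidates,'' and explicitly concede that ``essentially all of the work lies'' in the step you have not done. There is no evidence the discharging even converges to a finite list, and the consistency analysis is subtle precisely because types at different common neighbours are coupled through the whole orientation, not pairwise independent. The paper closes this gap by a completely different and fully executed route: a push clique has diameter at most $2$; by the Goddard--Henning theorem every planar graph of diameter $2$ has domination number at most $2$ (except one $9$-vertex graph, checked directly not to be a push clique); the domination-number-$1$ case reduces to the fact that an outerplanar oriented clique has at most $7$ vertices (the dominating vertex's neighbourhood, after pushing all in-neighbours, must induce an outerplanar oriented clique), giving order at most $8$; and the domination-number-$2$ case is killed by choosing a dominating pair $\{x,y\}$ maximizing $|N(x)\cap N(y)|$, partitioning the remaining vertices into the common neighbourhood $C$ and private regions, and showing by a planarity/triangulation case analysis that $|C|\geq 3$, $|C|\leq 3$, and $|C|\neq 3$ --- a contradiction for order at least $9$. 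You would need to either carry out your discharging programme in full or adopt some comparably concrete structural argument before your proposal counts as a proof of the upper bound.
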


Klostermeyer and MacGillivray showed that an outerplanar oriented clique can have at most seven vertices and any outerplanar oriented clique must 
have a particular oriented clique as a spanning subgraph~\cite{36}. 
Later this result was extended by providing an explicit list of eleven outerplanar graphs and proving that any outerplanar underlying 
oriented clique must have one of the eleven outerplanar graphs as its spanning subgraph~\cite{oclique_nandy}. In the same article the following question was asked: 
``Characterize the set $L$ of graphs such that any planar graph is an underlying oriented clique if and only if it contains one of the graphs from $L$ as a spanning subgraph.''  
Here we answer an analogous version of the above question for planar underlying push cliques.

\begin{theorem}\label{th planar push clique list}
An undirected  planar graph $G$ is an underlying push clique   if and only if it contains one of the eighteen planar graphs depicted in 
Figure~\ref{fig planar push cliques} as a spanning subgraph.
\end{theorem}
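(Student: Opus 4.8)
The plan is to reduce the statement to an orientation-feasibility problem and then enumerate by order. I first record the mechanism of the push operation on $2$-paths: for a non-adjacent pair $\{u,v\}$ and a common neighbour $w$, pushing a set $S$ of vertices changes whether the path $u$--$w$--$v$ is a directed $2$-path (that is, $u \to w \to v$ or $v \to w \to u$) exactly according to the parity of $|S \cap \{u,v\}|$, while pushing the midpoint $w$ has no effect. It follows that an oriented graph $\overrightarrow{C}$ is a push clique if and only if every non-adjacent pair $\{u,v\}$ is joined both by a directed $2$-path through some common neighbour and by a source/sink $2$-path (of the form $u \to w' \leftarrow v$ or $u \leftarrow w' \to v$) through some other common neighbour. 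Hence a planar graph $G$ is an underlying push clique if and only if it admits an orientation in which, for every non-adjacent pair $\{u,v\}$, the set of common neighbours contains at least one of \emph{directed} type and at least one of \emph{source/sink} type.

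From this characterization I extract the pruning tools. A first necessary condition is that every non-adjacent pair has at least two common neighbours; together with Theorem~\ref{th planar push clique} this restricts attention to planar graphs on $n$ vertices with $1 \le n \le 8$. A second, sharper tool is algebraic: encoding each edge orientation as an element of $\mathbb{F}_2$, the requirement for a pair with exactly two common neighbours becomes a single linear (XOR) equation, and summing such equations around a forbidden configuration yields contradictions. The key example is $K_{2,3}$: if three pairwise non-adjacent vertices have the same two vertices as their only common neighbours, the three resulting parity equations sum to $0 = 1$, so such a configuration can never occur in a push-clique orientation. Cataloguing these forbidden parity patterns, which do most of the combinatorial work, is the first concrete task.

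The enumeration then proceeds by order $n$. For $n \le 3$ no non-edge can have two common neighbours, forcing $K_1, K_2, K_3$, that is, $H_1, H_2, H_3$; for $n = 4$ the two-common-neighbour condition forces a $4$-cycle, and one checks directly that $H_4 = C_4$ admits a push-clique orientation. For each $n \in \{5,6,7,8\}$ I would list, up to isomorphism, the planar graphs meeting the two-common-neighbour condition, discard those containing a forbidden parity pattern, confirm a valid orientation for each survivor, and keep the edge-minimal ones. Minimality is governed by monotonicity: if a spanning subgraph $H$ of $G$ admits a push-clique orientation, then orienting the remaining edges of $G$ arbitrarily keeps it a push clique, since the new edges only create additional common neighbours for pairs that were already satisfied (or make previously non-adjacent pairs adjacent). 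This search should return precisely $H_5$ on five vertices, the four graphs $H_6,\dots,H_9$ on six vertices, the five graphs $H_{10},\dots,H_{14}$ on seven vertices, and the four graphs $H_{15},\dots,H_{18}$ on eight vertices.

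The two directions of Theorem~\ref{th planar push clique list} then follow. For the ``if'' direction it suffices to exhibit one push-clique orientation of each of the eighteen graphs of Figure~\ref{fig planar push cliques}, after which monotonicity shows that every planar spanning supergraph is again an underlying push clique. For the ``only if'' direction, a planar underlying push clique has at most eight vertices and, by deleting edges while preserving the property, reduces to an edge-minimal planar underlying push clique on the same vertex set, which the enumeration identifies as one of the eighteen graphs; that minimal graph is the required spanning subgraph. I expect the main obstacle to be the completeness of the $n = 7$ and $n = 8$ analysis: the two-common-neighbour condition still admits many planar candidates, and for pairs with three or more common neighbours the feasibility condition is the non-linear ``not all equal'' requirement rather than a clean XOR, so both verifying orientability of each survivor and certifying that no minimal graph is missed is delicate. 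Here I would lean on Euler's formula to bound edge counts and on the limited planar embeddings of the dense seven- and eight-vertex candidates, and, where the casework becomes unwieldy, on an exhaustive machine check over the finitely many planar graphs of order at most eight.
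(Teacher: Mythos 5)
Your setup is sound and, in places, sharper than the paper's own treatment: your characterization of push cliques (each non-adjacent pair needs one common neighbour of \emph{directed} type and one of \emph{source/sink} type) is exactly the paper's ``agree on one vertex, disagree on another'' criterion; your monotonicity observation is the paper's remark that adding edges preserves push cliques; and your $\mathbb{F}_2$ parity argument ruling out $K_{2,3}$ is a genuinely cleaner justification than the paper's bare claim that $K_{2,3}$ is ``easy to observe'' not to be an underlying push clique. Through order $5$ your plan and the paper's proof essentially coincide, and your ``if'' and ``only if'' skeleton (exhibit orientations of the eighteen graphs plus monotonicity; bound the order by Theorem~\ref{th planar push clique} and reduce to edge-minimal graphs) is the same as the paper's.

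The genuine gap is at orders $6$, $7$ and $8$, which is where all the work lies. Your plan there is to list, up to isomorphism, the planar graphs meeting the two-common-neighbour condition, prune by parity patterns, and keep the edge-minimal survivors --- but you never execute this, never compile the promised catalogue of forbidden parity patterns, and the asserted outcome (that the search returns precisely $H_5,\dots,H_{18}$) is exactly the content of the theorem. Without further structure, the space to be scanned is all planar graphs on up to eight vertices (thousands of graphs), which is why you are forced to fall back on an unspecified machine check. The paper supplies precisely the structural reductions your proposal lacks: Lemma~\ref{ob key2} shows a planar underlying push clique of order at least $6$ with a degree-$2$ vertex is $H_6$, forcing minimum degree $3$; Dirac's theorem handles minimum degree $4$; and for minimum degree $3$ a contraction trick (delete a degree-$3$ vertex, join its neighbours, invoke the classification one order down together with Observation~\ref{ob ham6} and independence-number/$K_{3,3}$ planarity arguments) shows $G$ is Hamiltonian. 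This collapses the enumeration to adding chords of bounded types to a single Hamiltonian cycle --- casework a human can finish, which is why the paper can legitimately call its remaining analysis ``routine.'' Your proposal, as written, either needs these Hamiltonicity lemmas or needs the computer search to actually be carried out and documented; in its current form the decisive step is asserted, not proved.
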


 In Section~\ref{sec preliminary} we introduce some basic  definitions and notations. The proofs of 
 Theorem~\ref{th NP},~\ref{th planar push clique} and~\ref{th planar push clique list} are given in 
 Section~\ref{sec NP},~\ref{sec bounds} and~\ref{sec list}, respectively. Theorem~\ref{th planar push clique}  was published 
in EuroComb 2013~\cite{EC_sen}.

\section{Preliminaries}\label{sec preliminary}
For an oriented  graph $G$  every parameter we introduce below is denoted using $G$  as a subscript. In order to simplify notation, this subscript will be dropped whenever there is no chance of confusion.

 The set of all adjacent vertices of a vertex $v$ in an oriented graph $G$ is called its set of \textit{neighbors} and is denoted by $N_{G}(v)$. 
 If there is an arc $uv$, then $u$ is an \textit{in-neighbor} of $v$ and $v$ is an \textit{out-neighbor} of $u$. 
 The set of all in-neighbors and the set of all out-neighbors of $v$ are denoted by $N_{G}^-(v)$ 
  and $N_{G}^+(v)$, respectively. 
The \textit{degree} of a vertex $v$ in an oriented graph $G$, denoted by 
$d_{G}(v)$, is the number of neighbors of $v$ in $ G$. 
Naturally, the  \textit{in-degree} (resp. \textit{out-degree}) of a vertex $v$ in an oriented graph $ G$, denoted by 
$d^-_{G}(v)$ (resp. $d^+_{G}(v)$), is the number of in-neighbors (resp. out-neighbors) of $v$ in $ G$.
The \textit{order} $ |G|$ of an oriented graph $ G$ is the cardinality of its
set of vertices $V(G)$.

Two vertices $u$ and $v$ of an oriented graph \textit{agree} on a third vertex $w$ of that graph if $w  \in N^\alpha(u) \cap N^\alpha(v)$ for some 
$ \alpha \in \{+,-\}$. 
Two vertices $u$ and $v$ of an oriented graph \textit{disagree} on a third vertex $w$ of that graph if $w  \in N^\alpha(u) \cap N^\beta(v)$ for some $ \{ \alpha, \beta \} = \{+,-\}$.

Now, take an oriented cycle of length 4 with arcs $ab, bc, cd, ad$. Note that all the oriented graphs which are in push relation with it are isomorphic to it. We call this a \textit{special 4-cycle}.
Notice that the non-adjacent vertices of a special 4-cycle always get different colors as they are always connected with a 2-dipath, no matter which vertex of the graph you push. This is in fact a necessary and sufficient condition for two non-adjacent vertices to receive two distinct colors under any oriented coloring with respect to any push relation.
Thus push cliques can be characterized as those oriented graphs whose any two distinct vertices are 
  either adjacent or part of a special 4-cycle.

\section{Proof of Theorem~\ref{th NP}}\label{sec NP}
Given an oriented graph one can check if it is a push clique or not in polynomial time using the above mentioned characterization of a push clique.

 Let $G$ be a graph. Define $G_*$ to be the graph obtained by adding a vertex $v_*$ to $G$ such that $v_*$ is adjacent to each vertex of $G$. 
Then  the following holds.

\begin{lemma}
The graph $G_*$ is an underlying push clique if and only if $G$ is an underlying oriented clique. 
\end{lemma}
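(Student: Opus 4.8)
The plan is to rely on the characterization of push cliques recorded in Section~\ref{sec preliminary}: an oriented graph is a push clique exactly when every pair of distinct vertices is either adjacent or forms the opposite (non-adjacent) pair of a special 4-cycle. I would first observe that for $u,w\in V(G)$ the pair $\{u,w\}$ is non-adjacent in $G$ if and only if it is non-adjacent in $G_*$, since $v_*$ is joined to all of $V(G)$; thus the non-adjacent pairs of $G_*$ are exactly those of $G$.

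For the ``if'' direction I would start from an orientation $\overrightarrow{G}$ of $G$ that is an oriented clique, extend it to $\overrightarrow{G_*}$ by orienting every edge at $v_*$ into $v_*$ (making $v_*$ a sink), and check the characterization. Pairs meeting $v_*$ are adjacent, so only a non-adjacent pair $\{u,w\}$ of $G$ needs attention; since $\overrightarrow{G}$ is an oriented clique there is a directed 2-path, say $u\to m\to w$, and then the arcs $u\to m$, $m\to w$, $w\to v_*$, $u\to v_*$ form a special 4-cycle on $\{u,m,w,v_*\}$ with $u,w$ as its opposite pair (the chord $mv_*$ of $G_*$ is harmless). This makes $\overrightarrow{G_*}$ a push clique.

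For the ``only if'' direction I would take any orientation $\overrightarrow{G_*}$ of $G_*$ that is a push clique and first push it into a normal form: pushing the set $S=\{x\in V(G): v_*\to x\}$, but not $v_*$, flips precisely the arcs leaving $v_*$ and turns $v_*$ into a sink, and since every orientation in the push class of a push clique is again a push clique, the result $\overrightarrow{G_*}'$ is a push clique with $v_*$ a sink. I would then show that the restriction $\overrightarrow{G}'$ to $G$ is an oriented clique. Given a non-adjacent pair $\{u,w\}$ of $G$, it is the opposite pair of some special 4-cycle $C$ of $\overrightarrow{G_*}'$; if $v_*\notin C$ the connecting 2-path already lies in $\overrightarrow{G}'$, while if $v_*\in C$ then $v_*$, being a global sink, has in-degree $2$ in $C$ and is therefore the sink of $C$, so the non-adjacent pair $\{u,w\}$ (which cannot contain the sink and cannot be the adjacent source--sink pair) is the source--transitive pair and is joined through the remaining transitive vertex $z\in V(G)$, not through $v_*$. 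Either way $u,w$ are joined by a directed 2-path inside $\overrightarrow{G}'$, so $\overrightarrow{G}'$ is an oriented clique.

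I expect the only real obstacle to be this last point: a priori the special 4-cycle certifying a non-adjacent pair of $G$ could pass through $v_*$ and use $v_*$ as the middle of the $u$--$w$ 2-path, which would fail to connect $u$ and $w$ within $G$. Pushing $v_*$ to a sink is the device that rules this out, because a global sink can only play the sink role of a special 4-cycle, and for a special 4-cycle the source--transitive opposite pair is always joined through the other transitive vertex. The supporting structural facts I would record are elementary: reading off the arcs $ab,bc,cd,ad$ shows that the source and sink of a special 4-cycle are adjacent and that each opposite pair has a unique connecting 2-path whose middle vertex is a transitive vertex.
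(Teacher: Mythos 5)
Your proof is correct and takes essentially the same route as the paper's: both directions normalize the orientation by pushing the neighbours of $v_*$ on one side so that $v_*$ becomes a source (paper) or a sink (you), and then observe that since every non-adjacent pair of $G_*$ lies in $V(G)$ and agrees on $v_*$, the directed 2-path in its special 4-cycle certificate must pass through a vertex of $G$, so the restriction to $G$ is an oriented clique (and conversely). One harmless slip: pushing $S=\{x\in V(G): v_*\to x\}$ flips not only the arcs at $v_*$ but also all arcs between $S$ and $V(G)\setminus S$; this does not affect the argument, since all you use is that $v_*$ becomes a sink and that the push class (hence the push-clique property) is preserved.
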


\begin{proof}
Assume that $G_*$ is an underlying push clique. 
Let $\overrightarrow{G_*}$ be  an orientation of $G_*$ such that $\overrightarrow{G_*}$ is a  push clique. Let 
$\overrightarrow{G_*} ^{'}$ be the orientation of $G_*$ obtained by pushing all the in-neighbors of $v_*$. 
Therefore, in $\overrightarrow{G_*} ^{'}$ the vertices of $G$ are all out-neighbors of $v_*$. 
As $\overrightarrow{G_*} ^{'}$  is a  push clique, each pair of non-adjacent vertices of $\overrightarrow{G_*} ^{'}$ must agree on at least one vertex and must disagree on at least one vertex. Note that any pair of non-adjacent vertices must be vertices of $G$ and they agree on $v_*$. 
Thus, they must disagree on a vertex of $G$. Hence, the oriented graph induced by the vertices of $G$ obtained from $\overrightarrow{G_*} ^{'}$
is an oriented clique. Thus, $G$ is an underlying oriented clique. 

On the other hand, assume that $G$ is an underlying oriented clique. 
Let $\overrightarrow{G}$ be an orientation of $G$ such that  
$\overrightarrow{G}$ is an oriented clique. Now consider a  orientation $\overrightarrow{G_*}$ of $G_*$ in which every vertex of $G$ is an 
out-neighbor of $v_*$ and 
the oriented graph induced by the vertices of $G$ from $\overrightarrow{G_*}$ is isomorphic to $\overrightarrow{G}$. It is easy to observe that 
$\overrightarrow{G_*}$ is a push clique. 
\end{proof}

It is known that determining if a graph is an underlying oriented clique is NP-hard~\cite{bensmail2013complexity}. Hence, by the above lemma
our result follows.

\section{Proof of Theorem~\ref{th planar push clique}}\label{sec bounds}
 The lower bound follows from the example of the planar push clique of order 
8 depicted in Fig.~\ref{fig push planarmax}($i$).

\begin{figure}

\centering
\begin{tikzpicture}


\filldraw [black] (0,0) circle (2pt) {node[below]{}};
\filldraw [black] (2.5,0) circle (2pt) {node[below]{}};
\filldraw [black] (5,0) circle (2pt) {node[below]{}};

\filldraw [black] (1,.5) circle (2pt) {node[below]{}};
\filldraw [black] (4,.5) circle (2pt) {node[below]{}};

\filldraw [black] (2,1) circle (2pt) {node[below]{}};
\filldraw [black] (3,1) circle (2pt) {node[below]{}};

\filldraw [black] (2.5,2) circle (2pt) {node[below]{}};

\draw[->] (0,0) -- (1,0);
\draw[->] (1,0) -- (4,0);
\draw[-] (4,0) -- (5,0);

\draw[->] (0,0) -- (.5,.25);
\draw[->] (.5,.25) -- (1.5,.75);
\draw[-] (1.5,.75) -- (2,1);

\draw[->] (5,0) -- (4.5,.25);
\draw[->] (4.5,.25) -- (3.5,.75);
\draw[-] (3.5,.75) -- (3,1);

\draw[->] (1,.5) -- (1.75,.25);
\draw[-] (1.75,.25) -- (2.5,0);

\draw[->] (2,1) -- (2.25,.5);
\draw[-] (2.25,.5) -- (2.5,0);

\draw[-<] (3,1) -- (2.75,.5);
\draw[-] (2.75,.5) -- (2.5,0);

\draw[-<] (4,.5) -- (3.25,.25);
\draw[-] (3.25,.25) -- (2.5,0);

\draw[->] (2.5,2) -- (1.25,1);
\draw[-] (1.25,1) -- (0,0);

\draw[->] (2.5,2) -- (1.75,1.25);
\draw[-] (1.75,1.25) -- (1,.5);

\draw[->] (2.5,2) -- (2.25,1.5);
\draw[-] (2.25,1.5) -- (2,1);

\draw[->] (2.5,2) -- (3.75,1);
\draw[-] (3.75,1) -- (5,0);

\draw[->] (2.5,2) -- (3.25,1.25);
\draw[-] (3.25,1.25) -- (4,.5);

\draw[->] (2.5,2) -- (2.75,1.5);
\draw[-] (2.75,1.5) -- (3,1);

\node at (2.5,-.5) {$(i)$};


\filldraw [black] (7,0) circle (2pt) {node[left]{}};
\filldraw [black] (7,2) circle (2pt) {node[left]{$a$}};

\filldraw [black] (7.5,1) circle (2pt) {node[right]{}};

\filldraw [black] (8,.5) circle (2pt) {node[below]{$b$}};
\filldraw [black] (8,1) circle (2pt) {node[right]{}};
\filldraw [black] (8,1.5) circle (2pt) {node[right]{}};

\filldraw [black] (8.5,1) circle (2pt) {node[right]{}};

\filldraw [black] (9,0) circle (2pt) {node[right]{}};
\filldraw [black] (9,2) circle (2pt) {node[right]{}};

\draw[-] (7,0) -- (7,2);
\draw[-] (7,0) -- (9,0);
\draw[-] (7,0) -- (8,.5);
\draw[-] (7,0) -- (7.5,1);

\draw[-] (9,2) -- (7,2);
\draw[-] (9,2) -- (9,0);
\draw[-] (9,2) -- (8,1.5);
\draw[-] (9,2) -- (8.5,1);

\draw[-] (8,1) -- (8,1.5);
\draw[-] (8,1) -- (8,.5);
\draw[-] (8,1) -- (8.5,1);
\draw[-] (8,1) -- (7.5,1);

\draw[-] (7,2) -- (7.5,1);
\draw[-] (7,2) -- (8,1.5);

\draw[-] (9,0) -- (8.5,1);
\draw[-] (9,0) -- (8,.5);

\node at (8,-.5) {$(ii)$};

\end{tikzpicture}

\caption{$(i)$ A planar push clique of order 8, $(ii)$ The unique diameter 2 planar graph with domination number 3~\cite{dom}.}\label{fig push planarmax}
\end{figure}
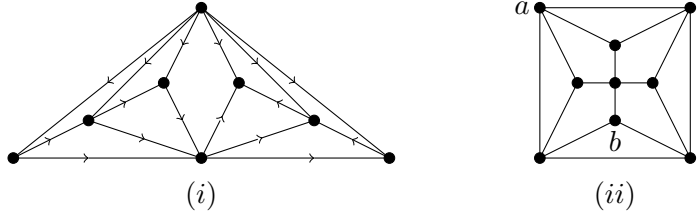

\medskip

Now we will prove the upper bound.
We know that a push clique is an oriented  graph with each pair of non-adjacent vertices 
agreeing on at least one other vertex and disagreeing on one other vertex.  In particular, a push clique  has diameter at most 2.

Goddard and Henning~\cite{dom} showed that every planar graph with diameter 2 has domination number at most 2 except for
a particular  graph on nine  vertices (see Fig.~\ref{fig push planarmax}($ii$)). It is easy check that vertices $a$ and $b$
in this graph   are not connected by two distinct 2-paths. Therefore, 
it is not a push clique. 
 Hence, a planar push cliques must have domination number at most 2.

Let $\overrightarrow{B}'$ be a planar push clique  dominated by the vertex $v$. 
Push all the in-neighbors of $v$ to obtain the oriented graph $\overrightarrow{B}$ from $\overrightarrow{B}'$. Note that $\overrightarrow{B}$
is a push clique with  $N^+_{\overrightarrow{B}}(v) = N_{\overrightarrow{B}}(v) = V(\overrightarrow{B}) \setminus \{v\}$.
Observe that each pair of non-adjacent vertices from $N_{\overrightarrow{B}}(v)$ agrees on $v$ and thus, must disagree on a vertex from 
$N_{\overrightarrow{B}}(v)$. Therefore, the graph induced by $N_{\overrightarrow{B}}(v)$ must be an oriented clique. Moreover, note that 
the oriented graph induced by $N_{\overrightarrow{B}}(v)$ is an outerplanar graph. We know that an outerplanar oriented clique can have at most 
seven vertices~\cite{sopena_updated_survey}. Thus, $\overrightarrow{B}$ has order at most eight.

To prove Theorem~\ref{th planar push clique} it will be enough to prove that  
any planar push clique with domination number 2 must have order at most 8. 
More precisely, we need to prove the following lemma.

\begin{lemma}\label{lem planar dom 2 implies order 8}
Let $ \overrightarrow{H}$ be a planar push clique with domination number 2.  Then $ |V(\overrightarrow{H}) | \leq 8$. 
\end{lemma}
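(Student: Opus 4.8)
The plan is to fix a minimum dominating set $\{u,v\}$ of $\overrightarrow{H}$ and, writing $A=N(u)\setminus(N(v)\cup\{v\})$, $C=N(u)\cap N(v)$ and $B=N(v)\setminus(N(u)\cup\{u\})$, to prove $|A|+|B|+|C|\le 6$. Since $\{u,v\}$ dominates, $V(\overrightarrow{H})=\{u,v\}\cup A\cup B\cup C$, so this yields $|V(\overrightarrow{H})|\le 8$. Two facts drive the argument. First, by the special $4$-cycle characterization every pair of non-adjacent vertices agrees on some vertex and disagrees on some vertex; as no single vertex can witness both, \emph{every non-adjacent pair has at least two common neighbours}. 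Second, in a planar graph a vertex adjacent to all others leaves an outerplanar remainder, and an outerplanar oriented clique has at most seven vertices~\cite{sopena_updated_survey}. I would begin by pushing all in-neighbours of $u$, so that $N(u)=N^+(u)$; the push-clique property is preserved, and now $u$ is a common out-neighbour of every pair of vertices of $N(u)$.

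First I would bound $|C|$. Since $u$ is adjacent to every vertex of $C$, the induced graph $H[C]$ is outerplanar. For the push-clique constraints inside $C$, observe that any two vertices of $C$ are out-neighbours of $u$ and hence agree on $u$; if such a pair has no common neighbour inside $C$, then $u$ and $v$ are its only common neighbours, and the required disagreement must occur on $v$. As three vertices cannot pairwise disagree on the single vertex $v$, at most two vertices of $C$ can be pairwise without a common neighbour inside $C$. Combined with the outerplanarity of $H[C]$, this bounds $|C|$ by a constant.

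Next I would examine $N(u)=A\cup C$. Every two non-adjacent vertices here agree on $u$, hence must disagree on some common neighbour $z$. Were $z$ always forced to lie in $N(u)$, the oriented graph induced on $N(u)$ would be an oriented clique, and being outerplanar it would satisfy $|A|+|C|\le 7$; the symmetric argument on $N(v)$ would bound $|B|+|C|$, and combining these with the bound on $|C|$ would finish the proof. The real difficulty, and the step I expect to be the main obstacle, is that $z$ may \emph{leak} out of $N(u)$: for a pair inside $C$ it may be $v$, and for a pair meeting $A$ (or $C$) it may be a vertex of $B$ joined to both endpoints. Controlling these leaks is the crux of the lemma.

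To control the leaks I would run a case analysis on whether $u\sim v$ and on the edges between the parts $A$, $B$ and $C$. The $C$-internal leaks are precisely the pairs already limited in the previous step, so they cost nothing extra. A leak through a $B$-vertex produces a planar configuration on $\{u,v\}$ together with the two endpoints and their common neighbour in $B$; bounding, via planarity, how many such crossing edges between $A\cup C$ and $B$ can coexist keeps $|A|$ and $|B|$ small. Assembling the resulting local bounds on $|A\cup C|$, $|B\cup C|$ and $|C|$ should force $|A|+|B|+|C|\le 6$, and hence $|V(\overrightarrow{H})|\le 8$. Tightness needs no further work, being witnessed by the order-$8$ planar push clique of Figure~\ref{fig push planarmax}$(i)$.
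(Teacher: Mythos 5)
Your setup (the partition $V(\overrightarrow{H}) = \{u,v\} \cup A \cup B \cup C$, the push normalization $N(u)=N^{+}(u)$, and the fact that non-adjacent vertices must agree somewhere and disagree elsewhere, hence have two common neighbours) parallels the paper's, but the two steps that carry all the weight do not hold up. First, your bound on $|C|$ fails. The claim that a non-adjacent pair of $C$ with no common neighbour inside $C$ has $u$ and $v$ as its \emph{only} common neighbours is false: such a pair may share a neighbour in $A$ or in $B$, and the required disagreement can occur there, so nothing forces it onto $v$. Worse, even granting that claim, the conclusion is a non sequitur: outerplanarity of $H[C]$ together with ``at most two vertices of $C$ pairwise lack a common neighbour inside $C$'' does \emph{not} bound $|C|$. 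Take $H[C]$ to be a fan, i.e.\ a path plus one vertex $q$ of $C$ adjacent to every vertex of that path: then every pair in $C$ is adjacent or has the common neighbour $q$ inside $C$, $H[C]$ is outerplanar, and $|C|$ is arbitrary. What actually excludes this configuration is planarity of $H[C\cup\{u,v\}]$ (it would contain $K_{3,|C|-1}$), i.e.\ one must exploit the joint planar embedding of $C$ with \emph{both} dominating vertices. That is precisely what the paper does: it draws $\{x,y\}\cup C$ as a $K_{2,k}$, which cuts the plane into regions, and argues region by region about where the remaining vertices can sit.

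Second, your steps two and three are a plan, not a proof. You correctly identify the leak problem --- the disagreement witness for a pair inside $N(u)$ may be $v$ or a vertex of $B$ --- and correctly call it the crux, but then you only assert that a case analysis on $u\sim v$ and on the $A$--$B$--$C$ edges ``should force'' $|A|+|B|+|C|\le 6$. That case analysis \emph{is} the lemma, and nothing in the sketch shows how it closes or why it ends at the constant $6$. The paper completes it using two devices absent from your proposal: (i) the dominating pair $\{x,y\}$ is chosen to maximize $|N(x)\cap N(y)|$ over all dominating pairs of size two, so that whenever the analysis produces another dominating pair with more common neighbours a contradiction ensues --- this maximality is invoked at nearly every branch; and (ii) the graph is assumed triangulated (harmless, since adding arcs preserves planarity and the push-clique property), so that missing 2-paths force specific edges rather than open-ended possibilities. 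With these tools the paper shows, for a hypothetical example of order at least $9$, that $|C|\ge 3$, $|C|\le 3$, and finally $|C|\ne 3$, a contradiction. Without them, or substitutes for them, your outline cannot be completed as written.
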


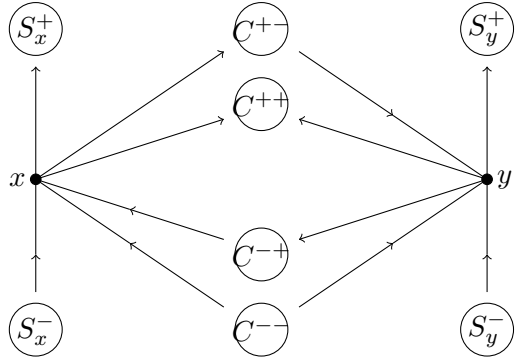
\begin{figure}

\centering
\begin{tikzpicture}

\draw [black] (3,0) circle (10pt) {node[]{$C^{--}$}};
\draw [black] (3,1) circle (10pt) {node[]{$C^{-+}$}};
\draw [black] (3,3) circle (10pt) {node[]{$C^{++}$}};
\draw [black] (3,4) circle (10pt) {node[]{$C^{+-}$}};

\draw [black] (0,0) circle (10pt) {node[]{$S_x^{-}$}};
\draw [black] (0,4) circle (10pt) {node[]{$S_x^{+}$}};

\draw [black] (6,0) circle (10pt) {node[]{$S_y^{-}$}};
\draw [black] (6,4) circle (10pt) {node[]{$S_y^{+}$}};

\filldraw [black] (0,2) circle (2pt) {node[left]{$x$}};
\filldraw [black] (6,2) circle (2pt) {node[right]{$y$}};

\draw[->] (0,.5) -- (0,1);
\draw[->] (0,1) -- (0,3.5);

\draw[->] (6,.5) -- (6,1);
\draw[->] (6,1) -- (6,3.5);

\draw[->] (0,2) -- (2.5,3.7);

\draw[->] (0,2) -- (2.5,2.8);

\draw[->] (2.5,1.2) -- (1.25,1.6);
\draw[-] (1.25,1.6) -- (0,2);

\draw[->] (2.5,.3) -- (1.25,1.15);
\draw[-] (1.25,1.15) -- (0,2);

\draw[->] (3.5,3.7) -- (4.75,2.85);
\draw[-] (4.75,2.85) -- (6,2);

\draw[->] (6,2) -- (3.5,2.8);

\draw[->] (6,2) -- (3.5,1.2);

\draw[->] (3.5,.3) -- (4.75,1.15);
\draw[-] (4.75,1.15) -- (6,2);

\end{tikzpicture}

\caption{Structure of $\vec{G}$ (not a planar embedding)}\label{orientable not emb}

\end{figure}	

Let $\overrightarrow{G}$ be a planar push clique with $|V(\overrightarrow{G})| > 8$. 
Assume that $\overrightarrow{G}$ 
is triangulated and has domination number 2.

We define the partial order $\prec$ for the set of all dominating sets of order 2 of $ \overrightarrow{G}$ as follows: for any two
dominating sets $D = \{x,y\}$  and $D' = \{ x',y' \}$ of order 2 of $ \overrightarrow{G}$,  
$D' \prec D$ if and only if $ |N_{\overrightarrow{G}}(x') \cap N_{\overrightarrow{G}}(y')| < |N_{\overrightarrow{G}}(x) \cap N_{\overrightarrow{G}}(y)| $.

Let $D = \{x,y\}$ be a maximal dominating set of order 2 of $ \overrightarrow{G}$ with respect to $\prec$. 
Also for the rest of this proof, $t,t^\prime,\alpha, \overline{\alpha}, \beta, \overline{\beta}$ are variables satisfying $\{t,t^\prime \} = \{x, y\}$ and $\{\alpha, \overline{\alpha} \} = \{\beta, \overline{\beta} \} = \{+,- \}$.

Now, we fix the following notations (Fig: \ref{orientable not emb}): 

\begin{align}\nonumber
&C = N_{\overrightarrow{G}}(x) \cap N_{\overrightarrow{G}}(y)\text{, } C^{\alpha \beta} = N^{\alpha}_{\overrightarrow{G}}(x) \cap N^{\beta}_{\overrightarrow{G}}(y)\text{, } C_t^{\alpha} = N^{\alpha}_{\overrightarrow{G}}(t) \cap C,\\
&S_t = N_{\overrightarrow{G}}(t) \setminus C\text{, } S^{\alpha}_t = S_t \cap N^{\alpha}_{\overrightarrow{G}}(t)\text{ and } S = S_x \cup S_y. \nonumber
\end{align}

Without loss of generality, assume that, the oriented graph $\overrightarrow{G}$  is such that
 $C_x = C^+_x$, $S_x = S^+_x$, $S_y = S^+_y$ and  $|C^+_y| \geq |C^-_y|$. 
 Note that, it is possible to obtain an alternative orientation of $G$ from 
$\overrightarrow{G}$ that satisfies the above conditions by pushing some vertices of $\overrightarrow{G}$. 
That is why our assumption is valid. 

Hence we have,

\begin{equation}\label{orientable order}
9 \leq | \overrightarrow{G} | = | D | + | C | + | S |.
\end{equation}
 
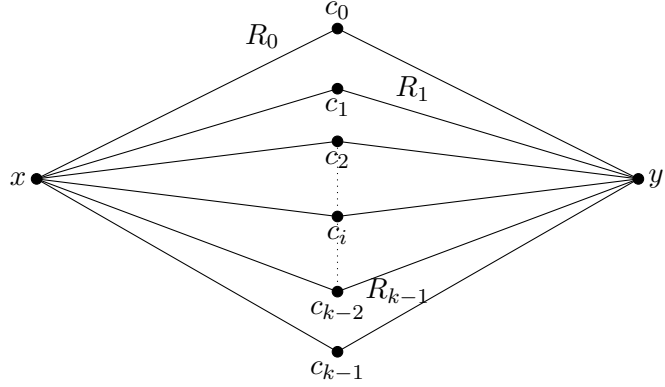
\begin{figure}

\centering
\begin{tikzpicture}

\filldraw [black] (0,2) circle (2pt) {node[left]{$x$}};
\filldraw [black] (8,2) circle (2pt) {node[right]{$y$}};

\filldraw [black] (4,4) circle (2pt) {node[above]{$c_0$}};

\filldraw [black] (4,3.2) circle (2pt) {node[below]{$c_1$}};

\filldraw [black] (4,2.5) circle (2pt) {node[below]{$c_2$}};

\filldraw [black] (4,1.5) circle (2pt) {node[below]{$c_i$}};

\filldraw [black] (4,.5) circle (2pt) {node[below]{$c_{k-2}$}};

\filldraw [black] (4,-.3) circle (2pt) {node[below]{$c_{k-1}$}};


\draw[-] (0,2) -- (4,-.3);
\draw[-] (0,2) -- (4,3.2);
\draw[-] (0,2) -- (4,4);
\draw[-] (0,2) -- (4,1.5);
\draw[-] (0,2) -- (4,2.5);
\draw[-] (0,2) -- (4,.5);

\draw[-] (8,2) -- (4,-.3);
\draw[-] (8,2) -- (4,3.2);
\draw[-] (8,2) -- (4,4);
\draw[-] (8,2) -- (4,1.5);
\draw[-] (8,2) -- (4,2.5);
\draw[-] (8,2) -- (4,.5);

\draw[dotted] (4,2.5) -- (4,.5);

\node at (3,3.9){$R_0$};

\node at (5,3.2){$R_1$};

\node at (4.8,.5){$R_{k-1}$};

\end{tikzpicture}

\caption{A planar embedding of $und( \vec{H})$ }\label{orientable undh}

\end{figure}

Let  $ \overrightarrow{H}$ be the oriented graph obtained from the induced subgraph $ \overrightarrow{G}[D \cup C]$ of $ \overrightarrow{G}$ by deleting all the arcs between the vertices of $D$ and  all the arcs 
 between the vertices of $C$.  Note that it is possible to extend  the planar embedding of $und(\overrightarrow{H})$ given in Fig~\ref{orientable undh}  to a planar embedding of $und(\overrightarrow{G})$ for some particular ordering of the elements of, say  $C = \{c_0, c_1, ..., c_{k-1} \} $.

Notice that $und( \overrightarrow{H})$ has $k$ faces, namely the unbounded face $F_0$ 
and the faces $F_{i}$ bounded by edges 
$xc_{i-1}, c_{i-1}y, yc_{i}, c_{i}x$ for $i \in \{ 1, ..., k-1 \}$. 
Geometrically, $und( \overrightarrow{H})$  divides the plane into $k$ connected components. 
The \textit{region}  $R_i$ of $ \overrightarrow{G}$  is the $i^{th}$ connected component (corresponding to the face $F_i$) of the plane.
\textit{Boundary points} of a region $R_i$ are $c_{i-1}$ and $c_i$ for $i \in \{ 1, ..., k-1 \}$ and, $c_0$ and $c_{k-1}$ for $i=0$. Two regions are \textit{adjacent} if  they have at least one common boundary point (hence, a region is adjacent to itself).

Now for the different possible values of $| C |$, we want to show that  $und( \overrightarrow{H})$ cannot be extended to a planar push clique of order at least $9$. Note that for extending $und( \overrightarrow{H})$ to $ \overrightarrow{G}$ we can add new vertices only from $S$. Any vertex $v \in S$ will be inside one of the regions $R_i$. If there is at least one vertex of $S$ in a region $R_i$, then $R_i$ is \textit{non-empty} and \textit{empty} otherwise. 
In fact,
when there is no chance of confusion,  $R_i$ might represent the set of vertices of $S$ contained in the region $R_i$.

First we will ask the question that ``How small $|C|$  can be?'' and prove the following lower bound of $|C|$.

\begin{lemma}
$|C| \geq 3$.
\end{lemma}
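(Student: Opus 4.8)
The plan is to suppose $|C| \le 2$ and derive a contradiction, keeping two facts at hand throughout. First, since $\overrightarrow{G}$ is a push clique, any two non-adjacent vertices must \emph{agree} on one vertex and \emph{disagree} on another; in particular every non-adjacent pair has at least two common neighbours. Second, from the partition $V(\overrightarrow{G}) = \{x,y\} \sqcup C \sqcup S$ furnished by the fact that $\{x,y\}$ dominates $\overrightarrow{G}$, inequality~(\ref{orientable order}) reads $|C| + |S| \ge 7$, so assuming $|C| \le 2$ forces $|S| = |S_x| + |S_y| \ge 5$. The strategy is then to show $|C| \ge 2$ outright and to rule out $|C| = 2$ separately, the latter being where the work lies.

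The bound $|C| \ge 2$ is immediate from the two facts above together with the triangulation hypothesis. If $x$ and $y$ are non-adjacent, then they already agree on one vertex and disagree on another, so $|C| \ge 2$. If instead $x \sim y$, then since $\overrightarrow{G}$ is triangulated and has at least nine (hence at least four) vertices, the edge $xy$ borders two distinct triangular faces whose apex vertices are two distinct common neighbours of $x$ and $y$, so again $|C| \ge 2$. Thus it remains only to exclude $|C| = 2$; write $C = \{c_0, c_1\}$, so that $und(\overrightarrow{H})$ (Fig.~\ref{orientable undh}) degenerates to the $4$-cycle $x c_0 y c_1$ bounding exactly two regions $R_0$ and $R_1$ (its inside and its outside), with all $\ge 5$ vertices of $S$ distributed among them.

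To eliminate this case I would play planarity against the forced abundance of common neighbours. Each $u \in S_x$ is non-adjacent to $y$, hence needs two common neighbours with $y$; these can only be drawn from $\{c_0,c_1\} \cup S_y$, plus $x$ itself in case $x \sim y$, and symmetrically each $v \in S_y$ needs two common neighbours with $x$ drawn from $\{c_0,c_1\} \cup S_x$ (plus $y$ if $x \sim y$). The core claim to establish is that a single region, whose boundary meets $N_{\overrightarrow{G}}(y)$ only in $\{c_0,c_1\}$, cannot host three mutually ``competing'' $S$-vertices each demanding two such common neighbours without forcing a $K_{3,3}$- or $K_5$-minor on $\{x,y,c_0,c_1\}$ and the $S$-vertices (equivalently, without violating Euler's formula for the triangulated disc); since $|S| \ge 5$ concentrates at least three $S$-vertices in one region, this would close the argument. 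Along the way I would invoke the maximality of $D$ with respect to $\prec$ to forbid competing configurations: any $2$-element dominating set produced by swapping $x$ or $y$ for a suitable $c_i$ or $S$-vertex must have common neighbourhood of size at most $|C| = 2$, which clashes with the adjacencies that the push-clique condition forces. The main obstacle is exactly this last step: the region bookkeeping demands a careful case split on whether $x \sim y$ and on how $S_x, S_y$ split between $R_0$ and $R_1$, and the normalisation $C_x = C_x^{+}$, $S_x = S_x^{+}$, $S_y = S_y^{+}$ must be tracked so that ``two common neighbours'' genuinely yields one agreement and one disagreement rather than a repeated orientation.
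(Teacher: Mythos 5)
Your first half ($|C| \ge 2$) is correct and matches the paper: the non-adjacent case follows from the push-clique condition, the adjacent case from triangulation. The genuine gap is in your route for excluding $|C| = 2$. Your ``core claim'' --- that a single region cannot host three $S$-vertices each demanding two common neighbours with the opposite dominator without forcing a $K_{3,3}$- or $K_5$-minor (or an Euler-formula violation) --- is false once the three vertices mix $S_x$ and $S_y$. Concretely, inside the quadrilateral $x c_0 y c_1$ place $x_1, x_2 \in S_x$ and $y_1 \in S_y$ with edges $xx_1$, $xx_2$, $yy_1$, $c_0x_1$, $x_1y_1$, $y_1x_2$, $x_2c_1$: then $x_1$ and $y$ have common neighbours $c_0, y_1$; $x_2$ and $y$ have $c_1, y_1$; and $y_1$ and $x$ have $x_1, x_2$ --- all drawn in the plane inside one region. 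So pigeonhole ($|S| \ge 5$ puts three $S$-vertices in one region) followed by a planarity obstruction cannot close the argument; planarity only bites when the vertices sharing a region all come from the same $S_t$, forcing them all onto $\{c_0, c_1\}$.

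This mixed case is precisely where the paper does its real work, and it is the step you defer to as ``the main obstacle.'' The paper first pins down which of the four sets $S_t \cap R_i$ can be non-empty (using the cross-region observation that vertices of $S_x$ and $S_y$ in different regions must all attach to both $c_0$ and $c_1$, and a maximality violation when exactly three sets are non-empty), and then, in the surviving case where $S_x$ and $S_y$ share one region, it exploits the triangulation-forced path structure $c_0 x_1 \cdots x_{n_x} c_1$ and $c_0 y_1 \cdots y_{n_y} c_1$ to force specific adjacencies, each time producing a dominating pair such as $\{x, c_0\}$ or $\{x, y_1\}$ with at least three common neighbours, contradicting the maximality of $D$ under $\prec$ --- not a minor-based contradiction. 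You correctly name all the ingredients (common-neighbour counts, planarity, maximality of $D$), but the argument that combines them is missing, and the claim you propose to substitute for it does not hold. As it stands, your proposal is a plan whose decisive step is both unproved and, in the stated form, unprovable.
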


\begin{proof}
We know that $x$ and $y$ are either connected by two distinct 2-paths or by an arc. So, if $x$ and $y$ are 
non-adjacent, then we have $|C| \geq 2$. If $x$ and $y$ are adjacent, then  the triangulation of $\overrightarrow{G}$ implies $|C| \geq 2$.

To complete the proof we need to show that $|C| \neq 2$. We will prove by contradiction. Therefore, assume that 
$|C| = 2$. To get a contradiction to  our 
assumption, by equation~\ref{orientable order}, it will be enough to show
$|S| \leq 4$.

Note that, if we have $S_x = \emptyset $, then triangulation will force either  
mutiple edges $c_0c_1$ (one in $R_0$ and one in $R_1$) or the edge $xy$ making $x$ a dominating vertex. 
Both contradicts our assumption. Hence we do not have  $S_t = \emptyset $ for any $t \in \{x,y\}$.

Note that if for some $i \neq j$ both $S_x \cap R_i$ and $S_y \cap R_j$ are  non-empty, then the vertices of the two sets must be adjacent to both $c_0$ and $c_1$.   This will imply $|S_x \cap R_i| \leq 1$ and $|S_y \cap R_j| \leq 1$.

Thus, if all the four sets $S_t \cap R_i \neq \emptyset$ for all $(t,i) \in \{x,y\} \times \{0,1\}$, then $|S| \leq 4$.

Assume that we have exactly three non-empty sets $S_x \cap R_0$, $S_x \cap R_1$ and $S_y \cap R_0$ among the 
four sets  $S_t \cap R_i$ for all $(t,i) \in \{x,y\} \times \{0,1\}$. 
By triangulation we have the edge $c_0c_1$ inside
$R_1$ and there is at least one vertex in $|S_x \cap R_0|$
 adjacent to  $c_0$. 
 Now we have the dominating set $\{x, c_0\}$ with at 
 least three common neighbors 
 ($c_1$, a vertex from $S_x \cap R_0$ and a vertex from $S_x \cap R_1$) contradicting the maximality of $D$.

Therefore, exactly two sets among the four sets  $S_t \cap R_i$ for all $(t,i) \in \{x,y\} \times \{0,1\}$ 
can be non-empty.
As $S_x, S_y \neq \emptyset $ we must have $S_x \cap R_i$ and $S_y \cap R_j$ as non-empty sets for some $i,j \in \{0,1\}$. 
If $i \neq j$, then $|S| \leq 2$. Thus, we can assume that  
 exactly the  sets   $S_x \cap R_0$ and $S_y \cap R_0$
are the two non-empty sets among the four sets $S_t \cap R_i$ for all $(t,i) \in \{x,y\} \times \{0,1\}$.

For this case, assume that $S_x = \{x_1,x_2,...$ 
$ ,x_{n_x}\}$ and $S_y = \{y_1,y_2,...,y_{n_y}\}$. 
Without loss of generality also assume that we have the 
edges $c_0x_1, x_1x_2, ..$
$., x_{n_x - 1}x_{n_x}, x_{n_x}c_1$ and the edges 
$c_0y_1, y_1y_2, ..., y_{n_y - 1}y_{n_y}, y_{n_y}c_1$ by triangulation. 
Furthermore, we can assume $n_x \geq n_y$ without loss of generality. 

Assume $n_y = 1$. So, to have $|S| \geq 5$ we should have $n_x \geq 4$.
Note that we cannot have the edge $xy$ as otherwise $\{y_1,x\}$ will be a dominating set with at least three common neighbors $\{c_0,c_1,y\}$ contradicting the maximality of $D$. Hence, we have the edge 
$c_0c_1$ inside $R_1$ by triangulation. Note that the vertex  $x_2 \in S_x$ must be adjacent to 
either $c_0$ or $c_1$ or $y_1$  to have two distinct 2-paths  connecting it to $y$. This will create a
dominating set $\{c_0,x\}$ or $\{c_1,x\}$ or $\{y_1,x\}$ with at least three common neighbors 
$\{x_1,x_2,c_1\}$ or $\{x_{n_x},x_2,c_0\}$ or $\{x_2,c_0,c_1\}$ respectively. This will contradict the maximality of $D$. Therefore, 
 $n_y \geq 2$.

Now assume that we have the edge $x_2c_0$. Then to have  two distinct 2-paths  connecting a vertex $w \in S_y$  to $x_1$ we will have $y$ adjacent to both $x_2$ and $c_0$. That means, each vertex of $S_y$ will be adjacent to 
both $x_2$ and $c_0$. But this is not possible keeping the graph planar as $n_y \geq 2$. So, there is no edge between $c_0$ and $x_2$. By similar arguments, we can show that every $t_i$ is non-adjacent to $c_0$ for 
$i \in \{2,3,...,n_t\}$ and every $t_j$ is non-adjacent to $c_1$ for 
$i \in \{1,2,...,n_t-1\}$ for all $t \in \{x,y\}$. With a similar argument we can also show that the edge 
$t_it_{i+k}$ for $1 \leq i < i+k \leq n_t$ does not exist unless $k=1$ for any $t \in \{x,y\}$.

Now notice that $n_x \geq 3$ by equation~\ref{orientable order} and the assumption that $n_x \geq n_y$.
By triangulation we must have the edge $x_2y_i$ for some $i \in \{1,2,...,n_y\}$.  
 Then to have two distinct 2-paths between $x_1$ and $y_j$ for $j \in \{i+1,...,n_y\}$ and 
 to have two distinct 2-paths between $x_3$ and $y_l$ for $l \in \{1,...,i-1\}$ we must have every vertex of $S_y$ adjacent to $x_2$. 
 
 If $n_y \geq 3$, then we cannot have two distinct 2-paths  between the 
 non-adjacent vertices $x_1$ and $y_3$. So we must have
$n_y = 2$.

 Now to have two distinct 2-paths between  the 
 non-adjacent vertices $x_1$ and $y_2$ we must have the edge $x_1y_1$. This creates the dominating set $\{x,y_1\}$
 with at least three common neighbors $\{c_0,x_1,x_2\}$ contradicting the maximality of $D$. Therefore, it is not possible to have $|C| = 2$. 
 \end{proof}

Now we will ask the question that ``How big $|C|$  can be?'' and prove the following upper bound of $|C|$.

\begin{lemma}
$|C| \leq 3$.
\end{lemma}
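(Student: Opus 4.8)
The plan is to argue by contradiction: assume $|C|=k\geq 4$ and derive $|\overrightarrow{G}|\leq 8$, contradicting~\eqref{orientable order}. First I would nail down the planar skeleton. Since $und(\overrightarrow{H})$ is the complete bipartite graph between $\{x,y\}$ and $C$, and its embedding in Fig.~\ref{orientable undh} extends to the chosen embedding of $und(\overrightarrow{G})$, every edge of $\overrightarrow{G}$ joining two vertices of $C$ must lie inside a single region $R_i$ and hence join the two boundary vertices of that region; so only cyclically consecutive pairs $c_{i-1}c_i$ (indices mod $k$) can be edges of $\overrightarrow{G}[C]$. Likewise every $s\in S$ sits in exactly one region $R_\ell$ and can be adjacent, among $C$, only to $c_{\ell-1}$ and $c_\ell$. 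The key corollary I would extract is that the common neighbours of two non-consecutive vertices $c_i,c_j$ are confined to $\{x,y\}$, together with the single vertex $c_{i+1}$ in the case $j=i+2$.

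Next I would exploit the orientation normalisation $C=C_x^+$, which makes $x$ an in-neighbour of every vertex of $C$, so that any two vertices of $C$ agree on $x$. Consequently two non-adjacent vertices of $C$, in order to lie on a special $4$-cycle, must \emph{disagree} on some other common neighbour. By the corollary a pair at cyclic distance at least $3$ has no common neighbour besides $x$ and $y$, so it is forced to disagree on $y$, i.e. to have one endpoint in $C^{++}=C_y^+$ and the other in $C^{+-}=C_y^-$. Thus no two vertices of the same class $C^{++}$ or $C^{+-}$ are at cyclic distance $\geq 3$; each class therefore has cyclic diameter at most $2$ and spans at most three consecutive $c_i$. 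Since the two classes cover all of $C$, this already gives $k\leq 6$, and, using $|C^{++}|\geq|C^{+-}|$, it pins the $k=6$ case to the rigid split into two arcs of three, with each distance-$2$ same-class pair $(c_i,c_{i+2})$ forced to disagree on $c_{i+1}$ (so the edges $c_ic_{i+1}$, $c_{i+1}c_{i+2}$ exist and form a directed path through $c_{i+1}$).

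It then remains to eliminate $k\in\{4,5,6\}$, and this is where the real work lies. Here I would bring in the triangulation: each region, being a quadrilateral $x\,c_{i-1}\,y\,c_i$, must be internally triangulated, so an empty region contributes the diagonal $c_{i-1}c_i$ (the diagonal $xy$ can be used in at most one region), while a nonempty region forces a path of $S$-vertices between its two boundary $c$'s exactly as in the $|C|=2$ analysis. I would then play these forced edges against the maximality of $D$: for a suitable vertex $w\in C$ the pair $\{x,w\}$ or $\{y,w\}$ is an order-$2$ dominating set in the relevant configurations, and counting its common neighbours—the consecutive $c$'s joined to $w$ by triangulation diagonals, together with the $S$-vertices trapped in the two regions incident to $w$—must not exceed $|C|$. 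Feeding in the bound $|S|\geq 7-k$ from~\eqref{orientable order}, each of the three cases should collapse: the common-neighbourhood cap leaves no room for the $S$-vertices demanded by the order hypothesis. The main obstacle is precisely this bookkeeping for $k=4,5,6$, where the push-clique two-path requirement, the planarity of the regions, and the maximality of $D$ must be balanced simultaneously; the tightest case is $k=6$, whose rigidly determined class structure and forced $c$-edges must be shown to already saturate $\{x,y\}$ as a maximal dominating set, precluding the extra vertices needed to reach order $9$.
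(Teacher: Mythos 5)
Your skeleton facts and your derivation of $|C|\le 6$ are correct; indeed that part is \emph{stronger} than the corresponding step of the paper, whose pigeonhole argument on the classes of $C$ is only invoked in the case $S=\emptyset$, whereas your version works for arbitrary $S$ because an $S$-vertex can never be a common neighbour of two non-consecutive vertices of $C$. But the proof stops being a proof right there: the elimination of $k\in\{4,5,6\}$ --- which is the entire content of the lemma once $k\le 6$ is known, and which you yourself call ``where the real work lies'' --- is only a plan, and ``each of the three cases should collapse'' is not an argument. This gap is genuine rather than presentational, because the plan you sketch is unlikely to close it. First, its structural premise for $k=6$ is false: a subset of a $6$-cycle with pairwise cyclic distances at most $2$ need not be an arc, so the alternating split $C^{++}=\{c_0,c_2,c_4\}$, $C^{+-}=\{c_1,c_3,c_5\}$ satisfies every constraint you derived, and the ``rigid split into two arcs of three'' you meant to exploit need not occur. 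Second, the maximality of $D$ can only be contradicted by exhibiting a dominating pair with \emph{more than} $|C|=k$ common neighbours, while \eqref{orientable order} guarantees only $|S|\ge 7-k\in\{1,2,3\}$ vertices outside $D\cup C$; for $k=5,6$ the graph barely has enough vertices for such a pair to exist at all, so maximality is the wrong lever in exactly the cases you call tightest.

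The ingredient that actually kills $k\in\{4,5,6\}$ --- and it is the paper's decisive second step, which your own skeleton corollary nearly hands you --- is to apply the requirement ``two common neighbours, agreeing on one and disagreeing on the other'' to pairs $\{s,c_j\}$ with $s\in S$, not only to pairs inside $C$. If $s\in S_x$ lies in region $R_i$ and $c_j$ has cyclic distance at least $2$ from both boundary points $c_{i-1},c_i$, then by your corollary $N(s)\cap N(c_j)\subseteq\{x\}$, which is impossible in a push clique. For $k\ge 5$ such a $c_j$ (e.g.\ $c_{i+2}$) exists for every region, so $S=\emptyset$ and $|\overrightarrow{G}|=2+k\le 8$, contradicting \eqref{orientable order}. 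For $k=4$ no such $c_j$ exists, but the same count applied to the two non-boundary vertices forces every $s\in S\cap R_i$ to be adjacent to both $c_{i-1}$ and $c_i$; planarity then allows at most one vertex of $S_x$ and one of $S_y$ per region, and the same common-neighbour count shows that two nonempty sets $S_t\cap R_i$, $S_{t'}\cap R_j$ can coexist only if they share a region or share $t$ together with a boundary point, so at most two such sets are nonempty, $|S|\le 2$, and $|\overrightarrow{G}|\le 2+4+2=8$, again a contradiction. The paper's proof is precisely this two-part scheme (class pigeonhole for the large-$|C|$ regime, region analysis of $S$ for $|C|\ge 4$); your argument reproduces and even strengthens the first part, but without the second part the lemma remains unproved.
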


\begin{proof}
First assume that $S = \emptyset$. Then $|C| = k \geq 7$ and there are at least four vertices of $C$ that agree with each other  on both $x$ and $y$. Among those four vertices, two much be such that they are non-adjacent and the only common neighbors they have are $x$ and $y$. Thus, those two non-adjacent vertices do not disagree on any vertex, a contradiction.

Now assume that $|C| = k \geq 4$ and $S \neq \emptyset$. Then, without loss of generality, assume a vertex 
 $v \in S_x \cap R_0$.  To have two distinct 2-paths connecting  $v$ to $c_1$ and $c_{k-2}$ we must have 
 the edges $vc_0$ and $vc_{k-1}$.  Hence we have $|S_x \cap R_0| \leq 1$. In fact with a similar argument we can show that 
 
 \begin{align}\label{eqn utko for c=4}
 |S_t \cap R_i| \leq 1 \text{ for all } (t,i) \in \{x,y\} \times \{0,1, ..., k-1 \}. 
 \end{align}

 Also note that if we have a vertex $v \in S_x \cap R_0$, then it is not possible to have any
  vertex in $S_y \cap R_i$ for all
$ i \in \{ 1,2,..., k-1 \}$ and in $S_x \cap R_j$ for all $ j \in \{ 1,2,..., k-1 \}$.  So basically, only adjacent regions can be non-empty.

Hence, at most two of the  sets $S_t \cap R_i$  for all $ (t,i) \in \{x,y\} \times \{0,1,...k-1 \}$  can be non-empty. 
Then equation~\ref{eqn utko for c=4} implies $|S| \leq 2$.  Then by equation~\ref{orientable order} we have
 
 \begin{align}\nonumber
 9 \leq |G| \leq  2 + 4 + 2 = 8.
 \end{align}
 
This is a contradiction.
 \end{proof}

Therefore, the only possible value for $|C|$ is 3. To prove Theorem~\ref{th planar push clique} we will show that 
$|C| = 3$ is not possible in the following lemma.

\begin{lemma}
$|C| \neq 3$.
\end{lemma}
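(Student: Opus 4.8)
The plan is to assume $|C| = 3$ and derive a contradiction. Write $C = \{c_0, c_1, c_2\}$, so that $und(\overrightarrow{H})$ (Figure~\ref{orientable undh} with $k = 3$) has exactly three regions $R_0, R_1, R_2$, each bounded by $x$, $y$ and two of the $c_i$'s. A feature that makes this case harder than $|C| \geq 4$ is that here every two regions are adjacent (they pairwise share a boundary point $c_i$), so the earlier ``only adjacent regions can be non-empty'' restriction gives nothing. Since $|\overrightarrow{G}| \geq 9$ and $|D| + |C| = 5$, equation~\ref{orientable order} forces $|S| \geq 4$, so the interiors of the three regions must together hold at least four vertices of $S$.

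First I would pin down where the vertices of $S$ can lie and how they attach. A vertex $v \in S_t \cap R_i$ lies strictly inside the quadrilateral bounding $R_i$, so by planarity it is non-adjacent to $t'$ (as $v \notin C$), to the unique $c_j$ not on the boundary of $R_i$, and to every $S$-vertex lying in a different region. Feeding each of these forced non-adjacencies into the special $4$-cycle characterisation of push cliques (each such pair must be joined by two distinct $2$-paths, on one of which they agree and on the other disagree) reproduces, region by region, the interior skeleton already seen in the $|C| = 2$ analysis: the $S_x$-vertices and the $S_y$-vertices of a region form triangulated fans hanging from $x$ and from $y$, each interior vertex is forced to reach a boundary $c_i$ or an opposite-side neighbour, consecutive interior vertices are joined by triangulation, and long-range chords within a fan are forbidden. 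This step also produces the quantitative bounds on $|S_t \cap R_i|$ needed to control the distribution of $S$.

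Next I would exploit triangulation together with the maximality of $D$ under $\prec$. Triangulation inserts the missing diagonals --- a chord $c_{i-1}c_i$ inside any empty region and the path edges along each interior fan --- and these forced edges tend to create a vertex $w$ (either some $c_i$ or an interior vertex) sharing three neighbours with $x$ or with $y$. Whenever the resulting pair $\{x, w\}$ or $\{y, w\}$ is dominating and has at least four common neighbours, the maximality of $D$ is contradicted; the recurring bookkeeping is to verify that the candidate pair really dominates, i.e.\ that it also hits every $S_{t'}$-vertex. The fixed orientations $C_x = C_x^+$, $S_x = S_x^+$, $S_y = S_y^+$ and $|C_y^+| \geq |C_y^-|$ supply the complementary source of contradictions: any two non-adjacent $S_x$-vertices already agree on $x$, hence must additionally disagree on a common neighbour, and if $x$ and $y$ are non-adjacent then they must both agree and disagree on $C$, forcing $C_y^+, C_y^- \neq \emptyset$. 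These orientation parities eliminate the distributions that survive the pure counting.

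Finally I would run the case analysis on how the at least four vertices of $S$ split among $R_0, R_1, R_2$ --- by pigeonhole some region carries at least two of them --- and dispatch each distribution by one of the three mechanisms above. The main obstacle is the case of a single region carrying a chain of several $S$-vertices: there planarity restricts which $c_i$'s and which opposite-side vertices a deep vertex of the chain can see, triangulation pins the interior down to a specific path-plus-chord skeleton, and the agree/disagree $2$-path requirement must be propagated all the way along the chain before maximality of $D$ or a direct failure of the two-$2$-path condition finally closes it. This chain case is the delicate, subcase-heavy heart of the lemma, and it is exactly the phenomenon that already made the $|C| = 2$ argument the longest of the three.
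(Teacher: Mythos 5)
Your proposal assembles exactly the right toolkit --- the two-distinct-2-paths necessary condition, planarity preventing adjacencies across regions, triangulation forcing chords and fan edges, and maximality of $D$ under $\prec$ yielding dominating-set contradictions --- and this is indeed the toolkit the paper uses. You also correctly get $|S| \geq 4$ from equation~\ref{orientable order} (the paper states $|S| \geq 5$, which is a slip, though it then relies on $|S_x| \geq 3$). But the proposal stops at the level of a battle plan: no case is actually closed, and no contradiction is actually derived. The entire content of this lemma \emph{is} the case analysis you defer, so this is a genuine gap rather than a compressed proof.

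Concretely, three steps are missing. First, the case $S_y = \emptyset$ must be disposed of separately (the paper splits it by the number of non-empty regions: all three non-empty forces $|S_x \cap R_i| \leq 1$ hence $|S| \leq 3$; exactly two non-empty forces all of $S_x$ onto the common boundary vertex $c_0$, giving the dominating pair $\{c_0, x\}$ with four common neighbours; exactly one non-empty similarly). Your pigeonhole framing never isolates this case. Second, when $S_y \neq \emptyset$, the decisive structural step is that $S_x \cap R_i \neq \emptyset$ and $S_y \cap R_j \neq \emptyset$ with $i \neq j$ is impossible, because such a cross-region pair cannot have two common neighbours; this collapses all of $S$ into a single region, and you only gesture at it via ``forced non-adjacencies'' without drawing the conclusion. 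Third, inside that single region one must actually force $n_y = 1$ (via the planarity argument that every vertex of $S_y$ would need to be adjacent to both $x_2$ and $c_0$) and then reach the final contradiction through the dominating pairs $\{c_0,x\}$, $\{c_2,x\}$ and $\{y_1,x\}$. A further misdirection: you expect the fixed orientation conditions ($C_x = C_x^+$, $S_t = S_t^+$, $|C_y^+| \geq |C_y^-|$) to supply ``the complementary source of contradictions,'' but the paper's proof of this lemma never touches the orientation at all --- every contradiction lives in the underlying graph, so building the argument around orientation parities would send you down a path the lemma does not need and that you have not shown can be completed.
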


\begin{proof}
We will prove this lemma by contradiction. So, assume that $|C| = 3$.  Also, without loss of generality, 
 assume that $|S_x| \geq |S_y|$.

Note that by equation~\ref{orientable order} we have $|S| \geq 5$. Hence we have $|S_x| \geq 3$. 

First assume that $S_y = \emptyset$. Hence we do not have the edge $xy$ as otherwise $x$ will 
dominate the whole graph. Now note that any two regions are adjacent for $|C| = 3$. 
The vertices from different regions must be adjacent to their unique common boundary point to have two distinct 2-paths connecting them. 

Hence, if we have all the regions non-empty, then we will have 
the vertices of each region adjacent to both the boundary points of that region. This will imply

\begin{align}\nonumber 
|S_x \cap R_i| \leq 1 \text{  for all } i \in \{0,1,2\}.
\end{align}

This will imply $|S| \leq 3$ and contradict our assumption. Hence, it is not possible to have all the three regions non-empty when $S_y = \emptyset$.

If we have exactly two regions, say $R_0$ and $R_1$,  non-empty, then every vertex of $S_x$ must be adjacent to 
$c_0$ to create two distinct 2-paths between the vertices of $S_x \cap R_0$ and the vertices of 
$S_x \cap R_1$. This will create a dominating set $\{c_0,x\}$ with at least four common neighbors contradicting the maximality of $D$. Hence, we can have at most one region non-empty when $S_y = \emptyset$.

Now assume that exactly one region, say $R_1$, is non-empty. Then each vertex of $S_x$ must be adjacent to 
either 
$c_0$ or $c_1$ to have two distinct 2-paths connecting it to $c_2$. Then, without loss of generality, we will have at least three vertices of $S_x$ adjacent to $c_0$. This will create a dominating set $\{c_0,x\}$ with at least four common neighbors (three vertices from $S_x$ and $c_2$ because of triangulation) contradicting the maximality of $D$. 

Hence $S_y \neq \emptyset$.

\medskip

Now assume, without loss of generality, that  $S_x \cap R_0 \neq \emptyset$. This implies 
$S_y \cap R_1 = \emptyset$ and $S_y \cap R_2 = \emptyset$ as it is not possible to have two distinct 2-paths between 
the vertices of $S_x$ from one region and the vertices of $S_y$ from a different region. But we also know that $S_y \neq \emptyset$. Hence we must have $S_y \cap R_0 \neq \emptyset$. 

Now assume that $S_x = \{x_1,x_2,...$ 
$ ,x_{n_x}\}$ and $S_y = \{y_1,y_2,...,y_{n_y}\}$. 
Without loss of generality also assume that we have the 
edges $c_0x_1, x_1x_2, .., x_{n_x - 1}x_{n_x},$
$ x_{n_x}c_1$ and the edges 
$c_0y_1, y_1y_2, ...,$
$ y_{n_y - 1}y_{n_y}, y_{n_y}c_1$ by triangulation. 

Now $x_2$ must be adjacent to either $c_0$ or $c_2$ for having two distinct 2-paths connecting $x_2$ and $c_1$. 
Without loss of generality assume that $x_2$ is adjacent to $c_0$. Now each vertex of $S_y$ must be adjacent to both $x_2$ and $c_0$ to have two distinct 2-paths connecting it to $x_1$. But this contradicts the planarity of 
$\overrightarrow{G}$ unless we have $n_y = 1$. Therefore, $n_y = 1$.

If $n_y = 1$, then $n_x \geq 4$ by equation~\ref{orientable order}. If we have the edge $xy$ (say, inside 
region $R_2$), then each vertex of $S_x$ must be adjacent to $c_0$ to have two distinct 2-paths connecting it 
to  $c_1$ creating the dominating set $\{c_0,x\}$ with at least four common neighbors (the vertices of $S_x$) contradicting the maximality of $D$. Hence we do not have the edge $xy$.

Therefore, by triangulation, we must have the edges $c_0c_1$ and $c_1c_2$. Now it is not possible to have more than two vertices of $S_x$ adjacent to $c_0$ as it will create the dominating set $\{c_0,x\}$ contradicting the maximality of $D$. Similarly, it is not possible to have more than two vertices of $S_x$ adjacent to $c_2$.
But as $|S_x| \geq 4$, by triangulation we must have (to avoid having more than two vertices from $S_x$ 
adjacent to $c_0$ or $c_2$) at least two vertices of $S_x$ adjacent to $y_1$. This will create 
the dominating set $\{y_1,x\}$ with at least four common neighbors ($c_0,c_2$ and two vertices of $S_x$) contradicting the maximality of $D$.

Hence it is not possible to have $|C| = 3$. 
\end{proof}

The above three lemmas prove that for no value of $|C|$ it is possible to have a planar push clique 
with domination number 2 and order at least 9.

\section{Proof of Theorem~\ref{th planar push clique list}}\label{sec list}
We want to show that any planar underlying push clique must contain one of the graphs listed in Fig.~\ref{fig planar push cliques} as a spanning subgraph. 
It is easy to observe that each graph  listed in Fig.~\ref{fig planar push cliques} is planar (we provided a planar drawing) and a push clique 
(it is easy to check). 
Note that by adding edges to a push clique one obtains another push clique. 
So to prove our result it is enough to show that if $\overrightarrow{G}$ is a planar push clique with $k$ vertices then 
its underlying graph $G$ must contain one of the graphs with $k$ vertices listed in Fig.~\ref{fig planar push cliques} as its subgraph. 
From now on in this section whenever we mention a graph $H_i$ for any $i \in \{1, 2,\cdots, 18 \}$, we mean the $i$th graph depicted  in Fig.~\ref{fig planar push cliques}. 

Let $\overrightarrow{G}$ be a planar push clique with underlying graph $G$. Note that, $G$ must be connected and any two non-adjacent vertices of $G$ must have at least two common neighbors. Thus, 

\begin{observation}\label{ob key}
Any underlying push clique that is not a complete graph must contain a 4-cycle as subgraph. 
\end{observation}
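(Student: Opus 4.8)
The plan is to read off the statement directly from the special-4-cycle characterization of push cliques recalled in Section~\ref{sec preliminary}. Suppose $G$ is an underlying push clique that is not a complete graph, and fix an orientation $\overrightarrow{G}$ of $G$ that is a push clique. Since $G$ is not complete, it contains at least one pair of non-adjacent vertices $u$ and $v$, and the entire argument reduces to exhibiting a 4-cycle through such a pair.

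First I would invoke the characterization that in a push clique any two distinct vertices are either adjacent or part of a special 4-cycle. As $u$ and $v$ are non-adjacent, they must be part of a special 4-cycle, say on vertices $\{u,w,v,z\}$ with $u$ and $v$ as its non-adjacent pair. The underlying graph of this special 4-cycle is precisely the 4-cycle $u\,w\,v\,z\,u$, and since $\overrightarrow{G}$ is an orientation of $G$, these four edges all appear in $G$. Hence $G$ contains a 4-cycle as a subgraph, which is exactly the claim.

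Equivalently, one can phrase the same step via common neighbors: the defining property of a push clique forces every non-adjacent pair $u,v$ to agree on some vertex and to disagree on some vertex, so $u$ and $v$ have at least two distinct common neighbors $w\neq z$; then $u\,w\,v\,z\,u$ is the required 4-cycle. Either formulation makes the Observation immediate once the characterization of Section~\ref{sec preliminary} is available, so I do not expect any genuine obstacle here — the content is merely unpacking what ``non-adjacent vertices are connected by two distinct directed 2-paths'' means at the level of the underlying graph. The single point deserving a word of care is the distinctness $w\neq z$, which holds because a special 4-cycle has four distinct vertices (equivalently, the two directed 2-paths joining $u$ and $v$ have distinct internal vertices).
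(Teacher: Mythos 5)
Your proposal is correct and matches the paper's own (very terse) justification: the paper simply notes that any two non-adjacent vertices of an underlying push clique must have at least two common neighbors, which is exactly your second formulation, and your first formulation via the special-4-cycle characterization is the same fact in its oriented form. The distinctness point you flag ($w \neq z$) is indeed the only detail worth noting, and it holds for either of the reasons you give.
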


\medskip

If $|G| \leq 4$,  then by Observation~\ref{ob key} we can say that $G$ contains one of the four graphs $H_1, H_2, H_3, H_4$ as its spanning subgraph.   

\medskip

If  $|G| = 5$, then $G$ must contain a 4-cycle. The fifth vertex of $G$ must be adjacent  to at least two vertices of 
the 4-cycle. Thus,  $G$  either contains a $K_{2,3}$ or must contain a 5-cycle.   

It is easy to observe that $K_{2,3}$ is not an underlying push clique. If we replace the partite set containing
two vertices with an edge, then also the graph obtained is not an underlying push clique. But if we add an edge in the other partite set, a 5-cycle is created. Thus, $G$ must contatin a 5-cycle $abcdea$ (say).   

Note that the 5-cycle $abcdea$ with two incident chords $ac$ and $ce$ is not an underlying push clique
 as $b$ and $d$ are neither adjacent nor they have at least two common neighbors. 
A 5-cycle and a 5-cycle with a single chord is a subgraph of the above mentioned graph, thus not underlying push cliques. Therefore,
to obtain an underlying push clique from a 5-cycle we need  at least two non-incident edges.  
  The graph we get is $H_5$.
  
  \medskip

\begin{lemma}\label{ob key2}
If $H$ is an underlying planar push clique of order $n \geq 6 $ and has minimum degree 2, then $H$ is isomorphic to $H_6$.
\end{lemma}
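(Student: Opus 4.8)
The plan is to hang everything off a vertex of degree~$2$. Let $v$ be such a vertex, write $N_H(v)=\{a,b\}$, and set $M=V(H)\setminus\{a,b\}$ (so $v\in M$). Every vertex of $M\setminus\{v\}$ is non-adjacent to $v$, so by Observation~\ref{ob key} it shares at least two neighbours with $v$; since $v$ has only the neighbours $a,b$, it must be adjacent to both $a$ and $b$. Hence $a$ and $b$ are each adjacent to all of $M$, that is $H\supseteq K_{2,n-2}$ with parts $\{a,b\}$ and $M$, while $v$ has no neighbour inside $M$.

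Next I would extract two facts. (\emph{Planarity.}) In a fixed embedding, the vertices of $M$ acquire a cyclic order around the pair $\{a,b\}$, and a routine planarity argument shows that any edge of $H$ with both ends in $M$ joins two cyclically consecutive vertices (a non-consecutive chord, together with the two length-$2$ paths through $a$ and $b$, would trap an intermediate vertex of $M$ away from $a$ or $b$). (\emph{Orientation.}) Fix a push-clique orientation $\overrightarrow{H}$ and, for $m\in M$, record the pair $\tau(m)$ of orientations of the edges $am$ and $bm$. For any $m\in R:=M\setminus\{v\}$, the pair $v,m$ has $a,b$ as its only common neighbours, so the special-$4$-cycle condition (agree on one common neighbour, disagree on the other) forces $\tau(m)$ and $\tau(v)$ to differ in exactly one coordinate. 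Thus every vertex of $R$ receives one of only two types, and these two types differ in both coordinates; consequently any two vertices of $R$ either agree on both of $a,b$ or disagree on both, so no non-adjacent pair in $R$ can be certified by $a,b$ alone, and such a pair needs a further common neighbour, which (as $v$ is isolated in $M$) must lie in $R$.

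The heart of the argument combines these. Write the cyclic order of $M$ as $v=u_0,u_1,\dots,u_{n-3}$, so that $R=\{u_1,\dots,u_{n-3}\}$ and $u_1,u_{n-3}$ flank $v$. For $n\ge 7$ these two vertices are non-adjacent (their cyclic neighbours are $\{u_0,u_2\}$ and $\{u_{n-4},u_0\}$, and $u_{n-3}\neq u_2$), and any vertex adjacent to both would have to be consecutive to each, i.e.\ equal to $u_2=u_{n-4}$, which is impossible when $n\ge 7$; since $v$ is not adjacent to them either, their only common neighbours are $a,b$. This contradicts the orientation fact of the previous paragraph. Hence $n\le 6$, and with the hypothesis $n\ge 6$ we get $n=6$.

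Finally, for $n=6$ we have $R=\{u_1,u_2,u_3\}$ with $u_1,u_3$ non-adjacent; their required extra common neighbour in $R$ can only be $u_2$, forcing the edges $u_1u_2$ and $u_2u_3$, and no other edge inside $M$ is possible. Together with the $K_{2,4}$ from $\{a,b\}$ to $M$, these are exactly the edges of $H_6$ (with $u_2$ the middle and $u_1,u_3$ the ends), so $H$ contains $H_6$ as a spanning subgraph, which is what is needed. The step I expect to be delicate is the orientation fact: the bare ``two common neighbours'' condition of Observation~\ref{ob key} is automatically met inside $R$ (everyone shares $a,b$) and hence useless here, so one must instead run the finer agree/disagree analysis, using the degree-$2$ vertex to pin all of $R$ into just two antipodal types, in order to force the extra edges and ultimately the bound $n=6$.
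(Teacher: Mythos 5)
Your proof is correct and follows essentially the same route as the paper's: the degree-$2$ vertex forces $K_{2,n-2}$, the orientation constraint imposed by $v$ shows that non-adjacent non-neighbours of $v$ must have a common neighbour among themselves (your ``two antipodal types'' analysis is just an un-normalized version of the paper's step of pushing the non-neighbours of $v$ so that each becomes an out-neighbour of both $x$ and $y$), and planarity of the $K_{2,n-2}$ structure then forces $n=6$ together with the two path edges. One remark: exactly like the paper's own proof, you in fact establish only that $H$ contains $H_6$ as a spanning subgraph (the possible edge $ab$ between the two degree-$4$ vertices is never excluded, and cannot be, since adding edges to a push clique yields a push clique), but this weaker conclusion is all that the paper uses in the sequel.
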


  \begin{proof}
Let $H$ be an underlying planar push clique of order $n \geq 6 $. It is easy  to note that any vertex of $H$ must have degree at least two. 
 If some vertex $v$ of $H$ has degree equal to two then 
 each of its non-neighbor must be adjacent to both the neighbors of $v$, resulting in a $K_{2,n-2}$.  
 Given any orientation of $H$ we can push the neighbors of $v$, say $x$ and $y$, 
 in such a way that we have the arcs $\overrightarrow{xv}$ and $\overrightarrow{vy}$. 
 Thus, 
for being a push clique, each non-neighbor of $v$ must be in a special $4$-cycle with $v$ while the other vertices of the cycle are $x$ and $y$.  
Therefore, it is possible to push the non-neighbors of $v$ to obtain an  orientation of $H$ such that $\overrightarrow{xw}$ and $\overrightarrow{yw}$ are arcs for each non-neighbor $w$ of $v$. 
Let this so obtained  orientation of $H$ be $\overrightarrow{H}$. 
Now note that the only way for $\overrightarrow{H}$ to be a  planar push clique is to have the non-neighbors of $v$ induce a 2-dipath. 
In that case, $n-2 = 4$ and the underlying graph of $\overrightarrow{H}$ is isomorphic to the graph $H_6$. 
  \end{proof}

  \medskip

 If  $|G| = 6$, then either $G$ has minimum degree at least 3 or $G$ is isomorphic to  $H_6$.
In any case, $G$ has a Hamiltonian cycle (Dirac's Theorem (1952)~\cite{D.B.West}), say, $abcdefa$.

 Note that a six cycle can have two types of chords, namely, a \textit{long chord} connecting vertices at distance 3 and a \textit{short chord}
 connecting vertices at distance 2. We will  obtain a minimal planar underlying push clique on 6 vertices by adding chords to the 
 six cycle $abcdefa$ by case analysis. 
 
 If we do not add any long chord then we need to at least four short chords. The graph we obtain without adding any long chord is isomorphic to 
  $H_9$.  
  If we add exactly one long chord then we obtain a graph  isomorphic to  $H_7$ or $H_8$.  
  If we add exactly two long chords then we obtain a graph  isomorphic to 
  $H_6$. In fact, we do obtain some other graphs which contains one of the graphs $H_7, H_8, H_9$ as proper subgraphs and hence does not make it to our list.
  It is not possible to add three long chords keeping the graph planar.

\medskip

\begin{observation}\label{ob ham6}
Each edge of the graphs $H_{6}, H_{7}, \ldots, H_{14}$ is part of a Hamiltonian cycle.
\end{observation}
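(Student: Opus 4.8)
The plan is to prove this by a direct, finite verification, since $H_6,\ldots,H_{14}$ are explicit graphs on six or seven vertices each; the real task is to organize the check so that it is short and demonstrably complete. The guiding principle I would use is the elementary equivalence: an edge $uv$ of a graph lies on a Hamiltonian cycle if and only if the graph admits a Hamiltonian path whose two endpoints are $u$ and $v$. Indeed, concatenating such a path with the edge $uv$ closes it into a Hamiltonian cycle through $uv$; conversely, deleting $uv$ from any Hamiltonian cycle containing it leaves a Hamiltonian $u$--$v$ path. So it is enough, for each of the nine graphs and each of its edges, to exhibit one Hamiltonian cycle using that edge.

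First I would reduce the number of edges to inspect by exploiting symmetry. Wherever a graph $H_i$ in this range has a nontrivial automorphism---several of the drawings in Figure~\ref{fig planar push cliques} are symmetric about a vertical axis---the edges split into a handful of orbits under the induced action on $E(H_i)$, and it suffices to treat one representative from each orbit, since an automorphism carries a Hamiltonian cycle through an edge to a Hamiltonian cycle through its image.

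Next I would dispatch the edges lying on a ``backbone'' cycle for free. Recall from the construction preceding this statement that each of $H_6,H_7,H_8,H_9$ is obtained by adding chords to the Hamiltonian $6$-cycle $abcdefa$, and one checks directly that each of $H_{10},\ldots,H_{14}$ likewise contains a spanning cycle through all seven of its vertices. Every edge of that spanning cycle is therefore already on a Hamiltonian cycle, and the only remaining work is to handle the chords. For each chord $uv$ I would exhibit an explicit rerouting of the backbone cycle that detours through $uv$; because these graphs are small and chord-rich, one such rerouting per chord (up to symmetry) suffices, and in practice two or three Hamiltonian cycles per graph already have edge sets whose union is all of $E(H_i)$.

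There is no genuine conceptual obstacle here; the only real difficulty is bookkeeping, namely certifying that the case analysis is \emph{exhaustive}. The cleanest safeguard, and the form in which I would present the proof, is to list for each graph $H_i$ a small explicit set of Hamiltonian cycles and then verify that the union of their edge sets equals $E(H_i)$; this turns completeness into a one-line check per graph rather than an edge-by-edge argument, and it is robust to the labelling ambiguities inherent in reading the cycles off the picture.
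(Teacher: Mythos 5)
Your overall plan---a finite, symmetry-reduced verification in which one lists a few Hamiltonian cycles per graph and checks that their edge sets cover $E(H_i)$---is the only kind of argument this statement admits, and it does not conflict with the paper, which in fact offers no proof at all (the claim is stated as a bare observation). The reduction to Hamiltonian paths and the use of automorphisms are fine but purely organizational; the entire mathematical content of your proposal is the deferred assertion that the check succeeds (``one rerouting per chord suffices,'' ``two or three Hamiltonian cycles per graph already cover all of $E(H_i)$''). That assertion is false, and this is a genuine gap: the verification cannot be completed for $H_6$. Label the vertices of $H_6$ so that $u$ is its unique degree-$2$ vertex, $x$ and $y$ are the two neighbors of $u$, and $p,q,r$ are the remaining three vertices; as the proof of Lemma~\ref{ob key2} shows (and as one reads off Figure~\ref{fig planar push cliques}), each of $p,q,r$ is adjacent to both $x$ and $y$, the only edges among $p,q,r$ are $pq$ and $qr$, and $x,y$ are non-adjacent. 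Since $u$ has degree $2$, every Hamiltonian cycle of $H_6$ uses both edges $xu$ and $uy$, hence consists of the path $x\,u\,y$ together with a Hamiltonian path from $y$ to $x$ whose interior vertices are $p,q,r$; because the only edges inside $\{p,q,r\}$ are $pq$ and $qr$, that path must traverse them in the order $p,q,r$ or $r,q,p$. So $H_6$ has exactly two Hamiltonian cycles, $u\,x\,p\,q\,r\,y\,u$ and $u\,x\,r\,q\,p\,y\,u$, and neither contains the edge $xq$ nor the edge $yq$. These two edges lie on no Hamiltonian cycle whatsoever, so no family of Hamiltonian cycles covers $E(H_6)$, and the ``rerouting of the backbone through each chord'' you promise is impossible for exactly these two chords.

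The upshot is that the observation as stated is simply false for $H_6$ (it does hold for $H_7,\ldots,H_{14}$: for each of those graphs the spanning cycle visible in the figure plus two or three additional Hamiltonian cycles covers every edge, exactly as your method predicts). So the defect in your write-up is not merely that the bookkeeping is postponed; the postponed check fails, and a correct statement must exclude the two edges of $H_6$ joining $x$ and $y$ to the middle vertex $q$ of the path $pqr$. Had you executed your own plan on $H_6$ first, the degree-$2$ vertex argument above would have surfaced immediately. Note also that this is not a harmless slip of the paper's: the observation is invoked in the $|G|=7$ case precisely when $G'$ contains $H_6$ as a spanning subgraph and some edge of $H_6$ joins two neighbors of the deleted vertex $v$; if that edge is $xq$ or $yq$, the intended argument (insert $v$ into a Hamiltonian cycle through that edge) does not go through as written, so any repaired version of the observation has to be accompanied by a repaired case analysis there.
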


\medskip

If  $|G| = 7$, then each vertex of $G$ must have degree at least three by 
Lemma~\ref{ob key2}. If  $G$ has minimum degree 4 then $G$ is
 Hamiltonian by Dirac's Theorem (1952)~\cite{D.B.West}. Otherwise, let $v$ be a degree three vertex of $G$. Delete the vertex $v$ from $G$ and add the edges among its neighbors to obtain the graph $G'$. Note that as $G$ was an underlying planar push clique on 7 vertices, $G'$ must be an 
underlying planar push clique on 6 vertices. Thus, by what we have proved by now, $G'$ must contain one of  $H_{6}, H_{7}, H_{8}, H_{9}$
as its spanning subgraph. 
If that spanning underlying push clique of $G'$ contains one of the edges among the neighbors of $v$ then $G$ is Hamiltonian by 
Observation~\ref{ob ham6}. As the graphs $H_{7}, H_{8}, H_{9}$ have independence number 2, we will be done if $G'$ has one of these graphs as its spanning subgraph. The graph $H_{6}$
has independence number 3 and has exactly one independent set  of cardinality 3. 
If we add a vertex to the graph and make it adjacent to those three vertices, then a $K_{3,3}$ is created and thus the so obtained graph is not planar. Therefore, we can conclude that 
$G$ is Hamiltonian. After that a routine case analysis assuming number of long chords will settle this case. 

\medskip

If  $|G| = 8$, then each vertex of $G$ must have degree at least three by Lemma~\ref{ob key2}. If  $G$ has minimum degree 4 then $G$ is
 Hamiltonian by Dirac's Theorem (1952)~\cite{D.B.West}. 
The graphs $H_{11},H_{12},H_{13}$ have independence number 2. 
The graphs $H_{10}, H_{14}$ have independence number 3. Each of them has a unique independent set of  cardinality 3.   
If we add a vertex to $H_{10}$ or $H_{14}$  and make it adjacent to the vertices of its unique independent set of cardinality 3, then a $K_{3,3}$ minor is created and thus the so obtained graph is not planar. 
 Therefore, if $G$ has minimum degree 3, then using Observation~\ref{ob ham6} and arguing exactly like the case above we can conclude that  
$G$ is Hamiltonian.

Note that an eight cycle can have three types of chords, namely, a \textit{very long chord} connecting vertices at distance 4, a long chord and 
a short chord. 
A routine case analysis assuming number of very long chords with subcases assuming the number of long chords will settle this case.

\section{Conclusions}\label{sec conclusion}
We listed all  minimal planar underlying push cliques upto spanning subgraph inclusion. One can notice that there are four distinct minimal 
 planar underlying push cliques of maximum order (eight vertices). 
 This result is unlike the case of  planar underlying oriented clique where there is a unique planar underlying oriented clique of maximum order~\cite{oclique_nandy}. 
 
 The planar oriented cliques were instrumental in improving the bound of oriented chromatic number of planar graphs. Thus we hope that our list will help studies related to pushable chromatic number of oriented planar graphs.




\bibliographystyle{abbrv}
\bibliography{POreferences}

%
%
%
 
\end{document}